\DeclareMathOperator*{\argmin}{arg\,min}
\title{Are sketch-and-precondition least squares solvers numerically stable?}
\author{Maike Meier\thanks{Mathematical Institute, University of Oxford. (meier@maths.ox.ac.uk)} \and Yuji Nakatsukasa\thanks{Mathematical Institute, University of Oxford. (nakatsukasa@maths.ox.ac.uk)} \and Alex Townsend\thanks{Department of Mathematics, Cornell University. (townsend@cornell.edu)} \and Marcus Webb\thanks{Department of Mathematics, University of Manchester. (marcus.webb@manchester.ac.uk)}}
\date{\today}
\begin{document}

\maketitle

\begin{abstract}
Sketch-and-precondition techniques are efficient and popular for solving large least squares (LS) problems of the form $Ax=b$ with $A\in\mathbb{R}^{m\times n}$ and $m\gg n$. This is where $A$ is ``sketched" to a smaller matrix $SA$ with $S\in\mathbb{R}^{\lceil cn\rceil\times m}$ for some constant $c>1$ before an iterative LS solver computes the solution to $Ax=b$ with a right preconditioner $P$, where $P$ is constructed from $SA$. Prominent sketch-and-precondition LS solvers are Blendenpik and LSRN. We show that the sketch-and-precondition technique in its most commonly used form is not numerically stable for ill-conditioned LS problems. For provable and practical backward stability and optimal residuals, we suggest using an unpreconditioned iterative LS solver on $(AP)z=b$ with $x=Pz$. Provided the condition number of $A$ is smaller than the reciprocal of the unit round-off, we show that this modification ensures that the computed solution has a backward error comparable to the iterative LS solver applied to a well-conditioned matrix. Using smoothed analysis, we model floating-point rounding errors to argue that our modification is expected to compute a backward stable solution even for arbitrarily ill-conditioned LS problems. Additionally, we provide experimental evidence that using the sketch-and-solve solution as a starting vector in sketch-and-precondition algorithms (as suggested by Rokhlin and Tygert in 2008) should be highly preferred over the zero vector. The initialization often results in much more accurate solutions---albeit not always backward stable ones.

\end{abstract}

\begin{keywords}
Least squares, numerical stability, sketching, preconditioner
\end{keywords}

\begin{AMS}
65F10, 65F20
\end{AMS}

\section{Introduction}\label{sec:intro}
Randomized numerical linear algebra is a growing subfield of matrix computations that has produced major advances in low-rank approximation~\cite{martinsson2020randomized}, iterative methods~\cite{tropp2022randomized}, and projections~\cite{boutsidis2009random}. Sketch-and-precondition techniques are a class of randomized algorithms for solving overdetermined least squares (LS) problems of the form 
\begin{equation}\label{eq:LeastSquaresProblem} 
\min_{x\in\mathbb{R}^n}\|Ax-b\|_2,
 \qquad A\in\mathbb{R}^{m\times n}, \quad b\in\mathbb{R}^{m\times 1},
\end{equation}
where $m>n$. One first sketches $A$ to a smaller matrix $SA$ with a random sketch matrix
$S\in\mathbb{R}^{\lceil cn\rceil \times m}$ for some constant $c>1$, then a right preconditioner, $P$, is constructed from $SA$. Finally, one solves $Ax=b$ using an iterative LS solver with the right preconditioner $P$. There are many details of sketch-and-precondition based on how to sketch and construct $P$ as well as which iterative LS solver to employ. One of the most prominent sketch-and-precondition techniques is known as Blendenpik~\cite{Avron2010a} (see~\cref{alg:blendenpik}). In exact arithmetic, Blendenpik has a complexity of $\mathcal{O}(mn\log m)$ operations, which is better than the $\mathcal{O}(mn^2)$ QR-based direct solver.
Consequently, for large LS problems, Blendenpik can be substantially faster than the LS solver implemented in LAPACK~\cite{Avron2010a}.  However, are sketch-and-precondition techniques---such as Blendenpik---numerically stable?

\begin{algorithm}[h!]
		\caption{A sketch-and-precondition LS solver for~\cref{eq:LeastSquaresProblem} known as Blendenpik. Here, HHQR refers to the Householder QR algorithm.}\label{alg:blendenpik-standard}
	\begin{algorithmic} [1]
	\STATE Draw a random sketching matrix $S\in\mathbb{R}^{s\times m}$, where $m\gg s > n$
	\STATE Compute $B=SA$ 
	\STATE Compute the triangular factor $R$ of a QR factorization of $B$ using HHQR
	\STATE Solve $Ax=b$ with LSQR and right preconditioner $P = R^{-1}$
	\end{algorithmic}
 \label{alg:blendenpik}
\end{algorithm}

Surprisingly, we find that sketch-and-precondition techniques such as Blendenpik~\cite{Avron2010a} and LSRN~\cite{Meng2014} are numerically unstable in their standard form (see~\cref{fig:maincompintro}). 
For moderately ill-conditioned problems ($1\ll \kappa_2(A)\ll u^{-1}$, where $\kappa_2(A)=\sigma_{\max}(A)/\sigma_{\min}(A)$ is the condition number of $A$ and $u$ is the unit round-off), sketch-and-precondition iterations stagnate in terms of residual and backward error, potentially before optimal levels are reached. The main focus of this work is deriving a provable method to resolve these numerical instabilities. We suggest a modification to the sketch-and-precondition framework to obtain a new algorithm, which we coin sketch-and-apply. We show using classical stability analysis and experimentally that sketch-and-apply attains backward stable solutions under modest conditions. However, sketch-and-apply requires $\mathcal{O}(mn^2)$ operations, the same complexity as the classical least square solver based on QR. This work thus highlights the significant open problem: is there a fast (randomized) least-squares solver with guaranteed backward stability?

Throughout the paper, we assume that $A\in\mathbb{R}^{m\times n}$ and $SA\in\mathbb{R}^{s\times n}$ (for some $s$ such that $m>s>n$) are of full rank so that~\cref{eq:LeastSquaresProblem} has a unique solution.

\subsection{Sketch-and-precondition algorithms with a sketch-and-solve initialization} Sketch-and-precondition algorithms use sketching to construct a preconditioner to be used in an iterative LS method. Given an embedding matrix $S\in\mathbb{R}^{s\times m}$, $n<s\ll m$, and the resulting sketch $SA$, one would usually compute a QR decomposition of the sketch and use the inverse of the R-factor as a right preconditioner in LSQR. This is the basis of the Blendenpik algorithm~\cite{Avron2010a}. Another popular sketch-and-precondition technique is LSRN~\cite{Meng2014}, which uses the singular value decomposition (SVD) of $SA$ instead of the QR decomposition. The computational cost of Blendenpik is $\mathcal{O}(mn\log n)$ to compute the sketch, $\mathcal{O}(n^3)$ for the QR decomposition, and $\mathcal{O}(mn)$ cost per iteration in the iterative solver.

The most commonly used iterative solver for~\cref{eq:LeastSquaresProblem} is LSQR~\cite{paige1982lsqr}, which is competitive when $A$ is large, sparse, and well-conditioned. The number of iterations required to reach a desired accuracy typically depends on the condition number of $A$~\cite[Ch. 7.4]{bjorck1996numerical}. As a result, a preconditioner is necessary when $A$ is ill-conditioned to ensure a reasonable speed of convergence. With 
careful sketching, the condition number of $AP$ with $P=R^{-1}$ from~\cref{alg:blendenpik} ($SA=QR$) is small with high probability in exact arithmetic~\cite[Lem.~1]{rokhlin2008fast} (see~\cref{lem:exactcond}). In finite precision arithmetic, the condition number of the computed $AP$ remains modest in size, provided that $\kappa_2(A)u\ll 1$, where $\kappa_2(A)=\sigma_{\max}(A)/\sigma_{\min}(A)$ is the condition number of $A$ and $u$ is the unit round-off (see~\cref{thm:mainTheoremYhat}). However, as we will see, although $AP$ is well-conditioned, the application of $P$ can cause numerical errors. The crux of the numerical instability in sketch-and-precondition is the repeated application of $P=R^{-1}$, which is about as ill-conditioned as $A$. Indeed, it is recommended in the Blendenpik paper that LSQR is avoided when $\kappa_2(R) > 1/(5u)$~\cite{Avron2010a}. However, we find that for moderately ill-conditioned systems (such as $\kappa_2(A) \approx \sqrt{u^{-1}}$), numerical errors affect the accuracy of the solutions commensurately (see~\cref{sec:instability}).

Sketch-and-precondition in its standard form (see~\cref{alg:blendenpik}) has $x_0 = 0$ as the initial guess in LSQR. However, as suggested in Rokhlin and Tygert's paper~\cite{rokhlin2008fast}, a more natural guess is readily available; the solution to the sketched LS problem, i.e., 
\begin{equation}\label{eq:sketchedLSproblem}
x_0 = \argmin\limits_{x\in\mathbb{R}^{n}}\|SAx-Sb\|_2.
\end{equation}
This can be computed directly with the QR decomposition of $SA$; the resulting algorithm is displayed in pseudocode in~\cref{alg:SAP-initial guess}. Although originally proposed in~\cite{rokhlin2008fast} as part of the sketch-and-precondition framework, most implementations (e.g.~\cite{Avron2010a, Meng2014}) do not mention this choice of initial guess as part of their algorithms.\footnote{In the C implementation of Blendenpik~\cite{Avron2010a}, sketch-and-solve initialization is available as an option, but the user needs to append `improve\_start\_point' to the parameters.} 
\begin{algorithm}[h!]
		\caption{A sketch-and-precondition LS solver for~\cref{eq:LeastSquaresProblem} with a sketch-and-solve solution as initial guess. Here, HHQR refers to the Householder QR algorithm.}\label{alg:SAP-initial guess}
	\begin{algorithmic} [1]
	\STATE Draw a random sketching matrix $S\in\mathbb{R}^{s\times m}$, where $m\gg s > n$
	\STATE Compute $B=SA$ and $c = Sb$
	\STATE Compute both $Q$ and $R$ of the QR factorization of $B$ using HHQR
        \STATE Compute initial guess $x_0 = R^{-1}Q^Tc$
	\STATE Solve $Ax=b$ with LSQR and right preconditioner $P = R^{-1}$ and initial guess $x_0$
	\end{algorithmic}
 \label{alg:blendenpik_initialguess}
\end{algorithm}

The sketch-and-solve solution typically attains an accuracy within a small multiple of the optimal accuracy (see~\cref{sec:sketchsolveinit}). Although the attainable accuracy of sketch-and-precondition algorithms with a random initial guess will stagnate before a desired accuracy when dealing with ill-conditioned problems (see~\cref{fig:maincompintro}(b)), our experiments show~\cref{alg:SAP-initial guess} attains optimal residuals in most cases, and is significantly better than the standard, trivial initial guess $x_0=0$. 
Furthermore, as the QR decomposition (or SVD) of the sketch $SA$ is necessary for any sketch-and-precondition solver, this initial guess is obtained practically for free. We urge practitioners to adopt this as standard practice for these types of algorithms. It should be noted, however, that there are instances where the solution found by sketch-and-precondition with sketch-and-solve initialization does not attain backward stable solutions (see~\cref{fig:maincompintro}(a)). To further resolve the numerical instabilities, we introduce the \textit{sketch-and-apply} algorithm.

\subsection{Sketch-and-apply} The sketch-and-apply algorithm is a modification of the sketch-and-precondition algorithm for which we can ensure that the computed residual is close to optimal and the backward error is approximately machine precision regardless of the initial guess. We also prove backward stability. 

The modification of sketch-and-precondition is simple. Instead of using $P = R^{-1}$ as a preconditioner, we explicitly apply $P$ by computing $AP$ and then employ an unpreconditioned iterative LS solver on $(AP)z=b$ with $x=Pz$. We therefore call this a \textit{sketch-and-apply} technique. Of course, in exact arithmetic sketch-and-precondition and sketch-and-apply compute the same solution; however, for ill-conditioned LS problems in floating-point arithmetic, we find a significant difference. To our knowledge, this is the first algorithm for LS problems based on randomized sketching that is demonstrated to be backward stable (with a mild assumption to be made precise in~\cref{thm:mainTheoremLSQR}). 

By computing $AP$ explicitly, we remove all ill-conditioning from the iterative solver and instead use an unpreconditioned solver on a well-conditioned system. This results in accurate and backward stable solutions. In particular, we prove that if $\kappa_2(A)\ll u^{-1}$, our sketch-and-apply technique computes a backward stable solution provided that LSQR on a well-conditioned matrix computes a backward stable solution (see~\cref{thm:mainTheoremLSQR}).
\begin{algorithm}[h!]
	\caption{A sketch-and-apply LS solver for~\cref{eq:LeastSquaresProblem}. Here, HHQR refers to the Householder QR algorithm. One can include a sketch-and-solve initial guess by computing the QR decomposition $QR = SA$ and $z_0 = Q^TSb$, and setting $z_0$ as an initial guess in step 5.}
 \label{alg:ARBlendenpik}
 \begin{algorithmic}[1]
    \STATE	Draw a random sketching matrix $S\in\mathbb{R}^{s\times m}$, where $m\gg s > n$
	\STATE Compute $B=SA$
	\STATE Compute the triangular factor $R$ of a QR factorization of $B$ using HHQR
        \STATE Compute $Y = AR^{-1}$ with forward substitution 
	
	\STATE Solve $Yz=b$ with LSQR and no preconditioner 
	\STATE Compute $x = R^{-1}z$ with back substitution 
 \end{algorithmic}
\end{algorithm}

Unfortunately, while sketch-and-precondition techniques cost $\mathcal{O}(mn\log m)$ operations, sketch-and-apply costs $\mathcal{O}(mn^2)$ operations as $AP$ must be computed. Therefore, sketch-and-apply techniques have the same computational complexity as the classical QR-based LS solver. We note, however, that LSRN, another popular sketch-and-precondition algorithm, also uses $\mathcal{O}(mn^2)$ operations but is still competitive as the expensive computation is in matrix multiplication, which is highly parallelizable. The same applies to sketch-and-apply.

When $\kappa_2(A) \gtrsim u^{-1}$, our analysis does not guarantee that~\cref{alg:ARBlendenpik} leads to an accurate solution. However, in practice, we often get accurate final LS residuals. To explain why this is the case, we model floating-point rounding errors using smoothed analysis. That is, we consider ``smoothing" $A$ to $A + \sigma G/\sqrt{m}$, where the entries of $G$ are independent and identically distributed (i.i.d.)~standard Gaussian random variables and $\sigma$ is a scaling factor that we select as $\sigma = 10\|A\|_2u$. The idea is that $A + \sigma G/\sqrt{m}$ is significantly better conditioned than $A$ itself, assuming that $A$ is extremely ill-conditioned. In fact, for sufficiently small $\sigma$,  one can show that $\kappa_2(A + \sigma G/\sqrt{m}) \lesssim 1/\sigma$ with high probability~\cite{burgisser2010smoothed} (see~\cref{cor:smoothedKappa}). The additive perturbation of $\sigma G/\sqrt{m}$ to $A$ ensures that sketch-and-apply techniques can also compute solutions with good backward error, even when $A$ is extremely ill-conditioned (see~\cref{alg:smoothedARBlendenpikshort}). For extremely ill-conditioned LS problems, one could explicitly add an additive random perturbation or hope that floating point rounding errors deliver the same effect, as it often does in practice.

\begin{algorithm}[h!]
	\caption{A smoothed sketch-and-apply LS solver for~\cref{eq:LeastSquaresProblem} when $\kappa_2(A) \gtrsim u^{-1}$.}\label{alg:smoothedARBlendenpikshort}
 \begin{algorithmic}[1]
    \STATE Draw a random standard Gaussian matrix $G\in\mathbb{R}^{m\times n}$ with i.i.d.~entries
    \STATE Compute $\tilde{A} = A + \sigma G/\sqrt{m}$ for $\sigma = 10\|A\|_2u$
    \STATE Perform~\cref{alg:ARBlendenpik} on $\tilde{A}$
 \end{algorithmic}
\end{algorithm}

Throughout the paper, we assume that $A\in\mathbb{R}^{m\times n}$ and $SA\in\mathbb{R}^{s\times n}$, $m>s>n$, are of full rank, so that~\cref{eq:LeastSquaresProblem} has a unique solution.

\subsection{Sketch-and-solve initialization versus sketch-and-apply}
~\cref{fig:maincompintro}(b), as well as various other figures throughout this work, shows the enormous practical significance of using the sketch-and-solve solution as an initial guess. In the version of this paper first submitted to the journal, we discussed the instability of sketch-and-precondition and presented sketch-and-apply as the fix. A careful reviewer suggested that we initialize sketch-and-precondition with the sketch-and-solve solution. The improvement in stability and speed of convergence of the sketch-and-precondition when using the sketch-and-solve
 initialization (\cref{alg:blendenpik_initialguess}) should be one of the main take-aways of this work from a practical viewpoint. However, it should be noted that sketch-and-precondition with sketch-and-solve initialization is not completely freed from the numerical instabilities present in standard sketch-and-precondition (see \cref{fig:maincompintro}(a)). As a result, sketch-and-apply has significance when it is important to retrieve backward stable solutions.

The remainder of this work focuses on analyzing the sketch-and-apply algorithm. As far as the authors are aware, this is one of the first rigorous stability analyses for a randomized algorithm.  As a result, it is also one of the first randomized algorithms proven to be backward stable.

\subsection{The numerical instabilities of sketch-and-precondition}\label{sec:instability}
We now demonstrate the numerical instabilities that occur with sketch-and-precondition techniques.\footnote{Experiments are performed in MATLAB 2022b using 64-bit arithmetic on a
single core of a MacBook Pro equipped with a 2.3 GHz Dual-Core Intel Core i5 processor and 16 GB 2133 MHz LPDDR3 of system memory.} We construct LS problems at random by setting $A=U\Sigma V^T$, where $U\in\mathbb{R}^{m\times n}$ and $V\in\mathbb{R}^{n\times n}$ are random orthogonal matrices from the Haar distribution
and $\Sigma=\mbox{diag}(\sigma_1,\ldots,\sigma_n)$, where $\sigma_1,\ldots,\sigma_n$ are logarithmically spaced between $1$ and $\sigma_n = 10^{-10}$. We set $m = 10000$ and $n=100$. The right-hand side of the LS problem is generated as $b=Ax^*+e$, where $x^*$ is a random vector with i.i.d. standard Gaussian entries and $e$ represents a noise vector. The noise is chosen to be orthogonal to the column space of $A$ so that $x^*$ is the exact solution to $Ax=b$. For this reason, the computed residual is bounded from below by $\|e\|_2 = \|Ax^*-b\|_2$. We consider two values of $\|e\|_2$: (a) $\|e\|_2=10^{-2}$ (see~\cref{fig:maincompintro}(a)) and (b) $\|e\|_2=10^{-12}$ (see~\cref{fig:maincompintro}(b)).

To solve the LS problems, we use (1) standard sketch-and-precondition Blendenpik (SAP, see~\cref{alg:blendenpik}), (2) sketch-and-precondition Blendenpik with a sketch-and-solve initial guess (SAP-SAS, see~\cref{alg:blendenpik_initialguess}), (3) standard sketch-and-apply Blendenpik (SAA, see~\cref{alg:ARBlendenpik}), and (4) sketch-and-apply Blendenpik with a sketch-and-solve initial guess (SAA-SAS, see the caption of~\cref{alg:ARBlendenpik}). The only difference between the sketch-and-precondition and sketch-and-apply algorithms is how they employ the preconditioner; the actual preconditioner is identical. We use a sketching matrix $S\in\mathbb{R}^{4n\times m}$, known as a subsampled randomized cosine transform (see~\cref{sec:SRTTs}). The tolerance is set to $10^{-14}$ and the maximum number of iterations to $50$.

\begin{figure}[htpb]
  \centering
    \begin{overpic}[width = \linewidth]{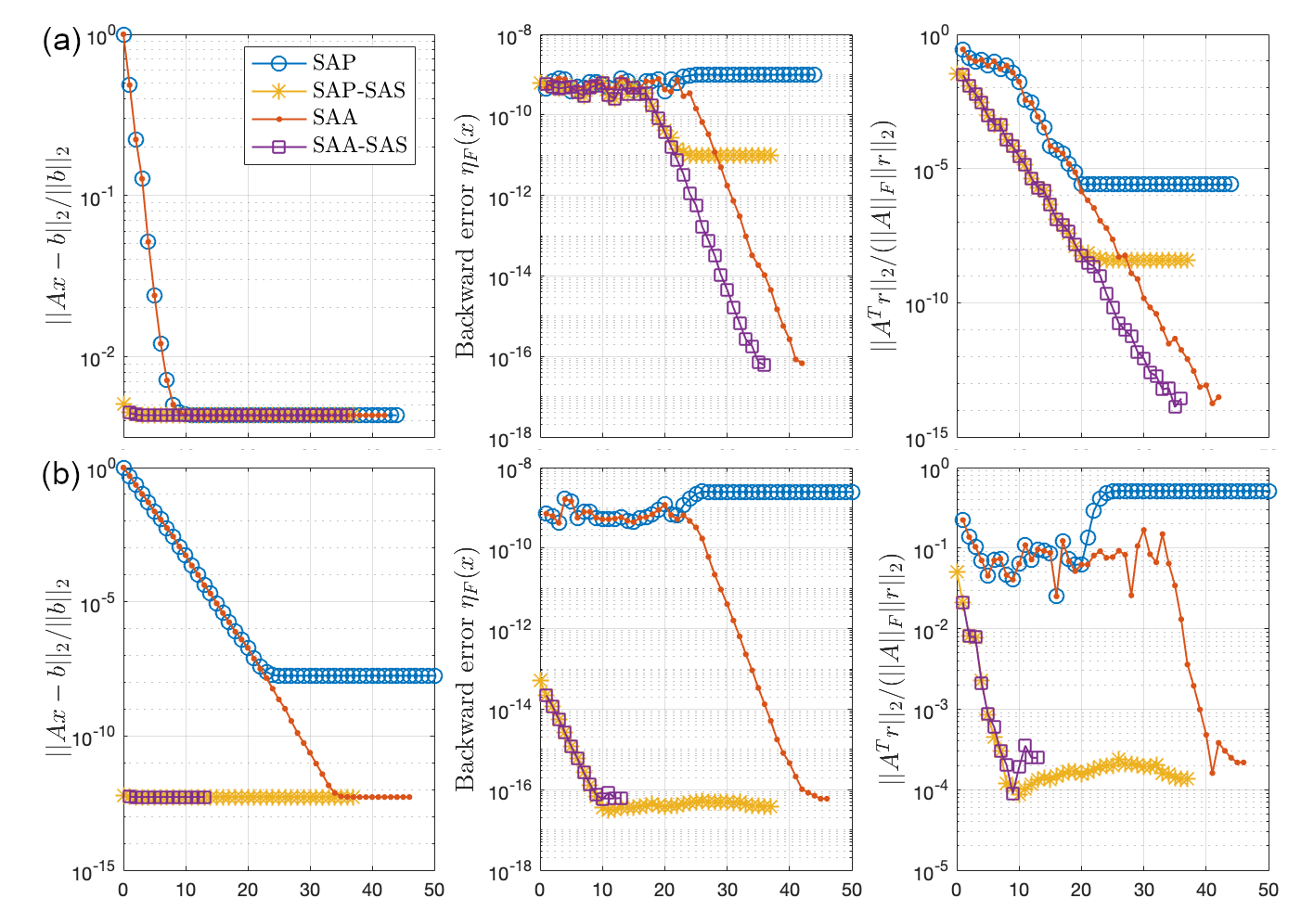}
        \put (20.5,55.5) {\tiny $\|e\|_2 = 10^{-2}$}
        \put (20,32) {\tiny $\|e\|_2 = 10^{-12}$}
    \end{overpic}
    \caption{Sketch-and-precondition Blendenpik (SAP) and sketch-and-apply Blendenpik (SAA), with and without sketch-and-solve initialization (SAS), applied to two noisy and randomly generated LS problems. The noise level is $\|Ax^*-b\|_2 = \|e\|_2 = 10^{-2}$ in the top row (a) and $\|Ax^*-b\|_2 = \|e\|_2 = 10^{-12}$ in the bottom row (b). The matrix $A$ is $10000\times 100$ with condition number $\kappa_2(A) = 10^{10}$. We display the residual, the backward error, and the relative normal residual for each LSQR iteration.}
    \label{fig:maincompintro}
\end{figure}

\cref{fig:maincompintro} compares the performance of the algorithms in terms of the relative residual $\|Ax-b\|_2/\|b\|_2$ (left column), the normwise backward error $\eta_F(x)$ (middle column, see~\cref{eq:defbackwarderrorLS}), and the relative normal residual $\|A^T(Ax-b)\|_2/(\|A\|_F\|Ax-b\|_2)$. A stable algorithm should compute a final solution with a relative residual close to the noise level given by $\|e\|_2$ and a backward error of order $u$, where the backward error is given by~\cite{walden1995optimal}
\begin{equation}\label{eq:defbackwarderrorLS}
    \eta_F(x) := \min\{\|[\Delta A, \Delta b]\|_F\, :\, \|(A+\Delta A)x - (b+\Delta b)\|_2 = \min\}.
\end{equation}
We compute it using~\cite{walden1995optimal} (see also~\cite[Thm.~20.5]{Higham2002})
$$ \eta_F(x) = \min\left\{\phi, \,\,\sigma_{\min}\left(\left[A\,\,\,\, \phi(I_m-rr^\dagger) \right]\right) \right\}, \quad \phi = \frac{\|r\|_2}{\sqrt{1 + \|x\|_2^2}}, \quad r = b-Ax,$$
where $\sigma_{\min}(B)$ indicates the smallest singular value of $B$.

Consider first~\cref{fig:maincompintro}(a): an inconsistent, moderately ill-conditioned problem. We see that all algorithms attain the optimal residual. However, considering the backward error, both sketch-and-precondition varieties fail to converge to a backward stable solution. The same behavior can be observed for the relative normal residual. In general we find that for inconsistent problems, i.e., problems with large optimal residual, sketch-and-precondition with or without initialization converges to a solution with good residual. However, for moderately ill-conditioned problems, these solutions are often not backward stable.

~\cref{fig:maincompintro}(b) shows the more obvious instabilities of sketch-and-precondition without initialization: the maximal attainable residual is not optimal. Consider the final residuals in~\cref{fig:maincompintro}(b). The sketch-and-precondition solution stagnates around $10^{-8}$, whereas all other algorithms attain $10^{-12}$. This instability is also visible in the backward error and relative residual, where we sketch-and-precondition without initialization is not backward stable.

Perhaps most notable about~\cref{fig:maincompintro}(b) is the success of sketch-and-solve initialization. The initial guess has an accuracy of the same order as the optimal solution in terms of residual (reflecting standard theory \cite{martinsson2020randomized}), and the following iterates usually quickly converge to the optimal residual. It appears that starting close to a good guess resolves much of the numerical instabilities of sketch-and-precondition Blendenpik. The initialization results in a backward stable solution. However, it must be noted that some floating point errors persist, as the backward errors for the inconsistent problems are not optimal (see ~\cref{fig:maincompintro}(a)).

Sketch-and-apply Blendenpik, with or without initialization, attains accurate solutions with a backward error of order $u$ in all cases. This supports our theoretical findings that sketch-and-apply is a backward stable algorithm (see~\cref{sec:condition}). Again, LSQR converges faster when initialization is used, and we always recommend doing this.

In all of the experiments, the tolerance tol in LSQR is set to be very small to allow us to investigate the maximal attainable accuracy. This checks both the relative normal residual $\|A^Tr\|_2/(\|A\|_F\|r\|_2)\leq {\rm tol}$ and the residual $\|Ax-b\|_2/\|b\|_2\leq {\rm tol}$. 
For highly inconsistent problems, our small choice of tol can result in many iterations before LSQR is stopped. It should be noted however, that the initialized algorithms reach the levels they will stagnate on much sooner. The LSQR tolerance is thus a delicate aspect, and significant research has been devoted to the subject~\cite{chang2009stopping,hallman2020estimating,stewartLS}.
\begin{figure}
    \centering
    \includegraphics[width = \linewidth]{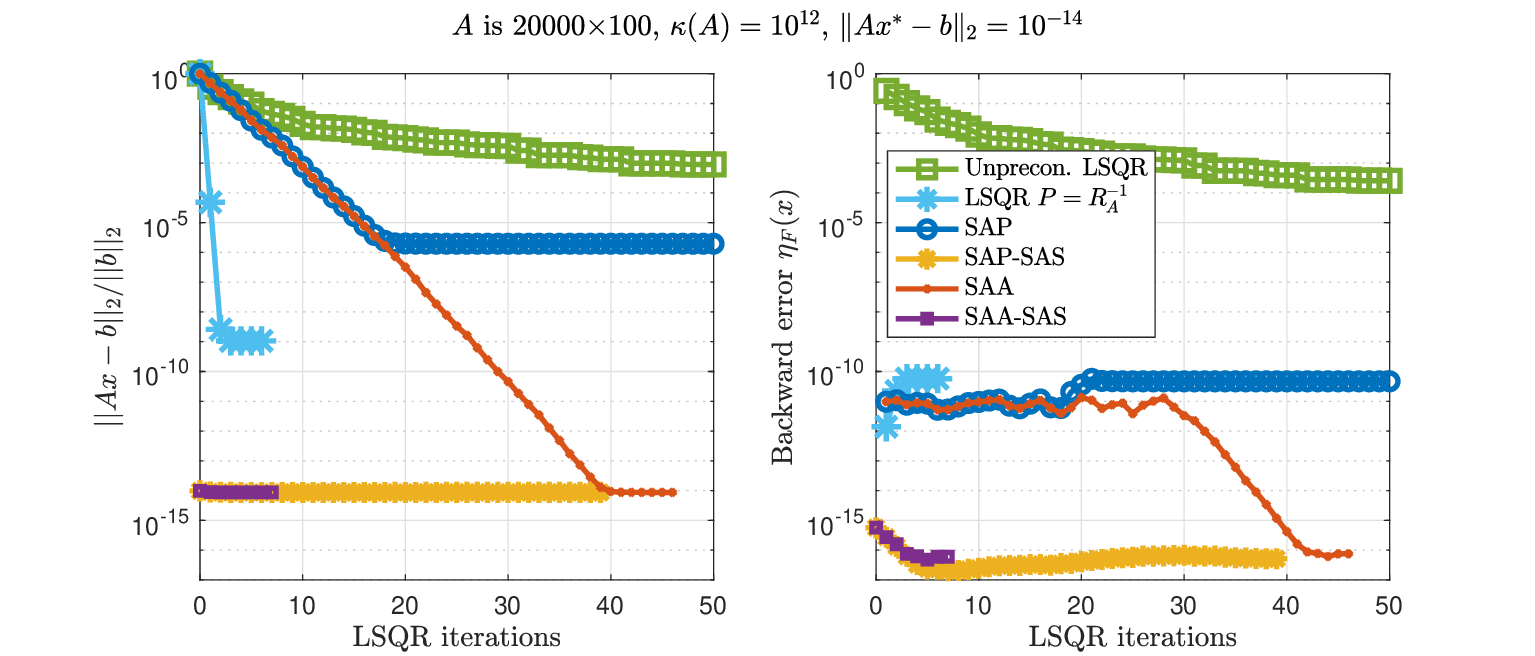}
    \caption{A comparison of the convergence of the relative residual and backward errors for unpreconditioned LSQR, preconditioned LSQR with $P = R_A^{-1}$ from the QR factorization of $A = Q_AR_A$, sketch-and-precondition (SAP) Blendenpik with and without sketch-and-solve (SAS) initialization, sketch-and-apply (SAA) Blendenpik with and without sketch-and-solve (SAS) initialization. The tolerance is set to machine precision.}
    \label{fig:LSQRcompintro}
\end{figure}

The numerical instabilities in standard sketch-and-precondition Blendenpik could be due to multiple sources. 
To demonstrate that it is the way that the preconditioner is employed, and not how it is formed, we try one more randomly generated LS problem with $A\in\mathbb{R}^{20000\times 100}$, $\kappa_2(A) = 10^{12}$, and a noise level of $\|e\|_2=10^{-14}$. We solve the LS problem with unpreconditioned LSQR and preconditioned LSQR, where the preconditioner is obtained by computing the QR factorization of $A$. This approach is not practical because it requires the QR factorization of $A$, but is done to demonstrate the quality of the preconditioners is not at fault. We find in \cref{fig:LSQRcompintro} that the sketch-and-solve initialization approach computes accurate backward-stable solutions when combined with either sketch-and-precondition or sketch-and-apply. For the various algorithms employing $x_0 = 0$ as an initial guess, sketch-and-apply Blendenpik is the only approach that computes a solution with a residual close to optimal and a backward error close to machine precision. We also note that the qualitative behavior shown in~\cref{fig:maincompintro,fig:LSQRcompintro} is unaffected by different types of sketch matrices or sketch dimensions. These choices influence $\kappa_2(AP)$ and $\kappa_2(P)$ slightly, which can somewhat affect the convergence behavior in terms of speed and maximal attainable accuracy. However, we do not observe qualitative differences provided that $\kappa_2(AP) = \mathcal{O}(1)$.

While we have not spotted the precise source of instability in the standard preconditioned LSQR routines, we believe it lies in the fact that each application of the preconditioner $P$---which involves solving an ill-conditioned linear system $Rx=b$---incurs a relative error proportional to $u\kappa_2(R)$. Such errors are present every time we apply $P$ or $P^T$. 
Moreover, each application behaves somewhat differently, in that in the $i$th iterate we have $(R+\Delta R_i)\hat x_i=b_i+\delta b_i$ where 
$\Delta R_i,\delta b_i$ are small but different for each $i$. 
In other words, one can view the preconditioner as having an $u\kappa_2(R)$ nonlinear effect. The fact that the convergence of iterative methods can get impaired by nonlinear preconditioners has been observed in~\cite{WathenReesetna08}. 

\subsection{Paper structure}
In~\cref{sec:preliminaries}, we introduce some background material on sketch-and-precondition LS solvers, sketching, numerical stability analysis, and smoothed analysis. In~\cref{sec:condition}, we consider sketch-and-apply Blendenpik (see~\cref{alg:ARBlendenpik}) in finite precision. In~\cref{sec:SmoothedAnalysisMain}, we consider extremely ill-conditioned LS problems and look at the numerical stability of smoothed sketch-and-apply Blendenpik (see~\cref{alg:smoothedARBlendenpikshort}). We introduce a master algorithm and display numerical experiments in~\cref{sec:masteralgorithmandnumexp}. Finally, in~\cref{sec:discussion}, we conclude by noting the practical significance of the sketch-and-solve initialization, and discuss the wider implications of the observed instabilities of standard sketch-and-precondition.

While in this paper we focus on LS problems, we expect much of the stability results to carry over to solving underdetermined linear systems using sketching, as done in LSRN~\cite{Meng2014}.

\section{Background material}\label{sec:preliminaries}
We now introduce some background material for sketch-and-precondition techniques (see~\cref{sec:SketchAndPrecondition}), random sketching matrices (see~\cref{sec:Sketching}), numerical stability analysis (see~\cref{sec:prelimsStability}), and smoothed analysis (see~\cref{sec:SmoothedAnalysis}). 

\subsection{Sketch-and-precondition least squares solvers}\label{sec:SketchAndPrecondition}
The idea behind the sketch-and-precondition technique is that a reasonable preconditioner for the LS problem in~\cref{eq:LeastSquaresProblem} can be constructed from a sketch of $A$. In Blendenpik, the matrix $A$ is sketched to a small tall-skinny matrix $SA$ and the preconditioner is taken to be the inverse of the upper triangular factor from a QR factorization of $SA$, i.e., $P=R^{-1}$. In exact arithmetic, the condition number of $AP$ is equal to $SQ_A$, where $Q_A$ is an orthonormal basis for the column space of $A$. 

\begin{lemma}\label{lem:exactcond}  Let $A \in \mathbb{R}^{m\times n}$ have linearly independent columns, $S\in\mathbb{R}^{s\times m}$ with $s\geq n$ have linearly independent rows, and $B = SA$. If $A = Q_A R_A$ and $B = Q R$ are economized QR factorizations of $A$ and $B$, respectively, then 
\[
\kappa_2(AP) = \kappa_2(SQ_A).
\]
where $P = R^{-1}$. 
\end{lemma}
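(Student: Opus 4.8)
The plan is to write everything in terms of $Q_A$ and the matrix $SQ_A$, exploiting that $P=R^{-1}$ exactly rescales the sketch. First I would observe that since $A$ has linearly independent columns, $A = Q_A R_A$ with $R_A$ invertible, so the column space of $A$ equals that of $Q_A$, and $AP = AR^{-1} = Q_A R_A R^{-1}$. The key identity to extract is that $R_A R^{-1}$ is an orthogonal matrix: indeed, $B = SA = S Q_A R_A$, and also $B = QR$, so $Q R = S Q_A R_A$, i.e. $S Q_A = Q R R_A^{-1}$. Reading this as a QR-type factorization of $S Q_A$ (with $Q$ having orthonormal columns and $R R_A^{-1}$ upper triangular and invertible), we get that the singular values of $S Q_A$ coincide with those of $R R_A^{-1}$, hence with those of its inverse-transpose relatives — more directly, $\sigma_i(SQ_A) = \sigma_i(R R_A^{-1})$ for all $i$.

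Next I would connect this to $AP$. We have $AP = Q_A R_A R^{-1} = Q_A (R R_A^{-1})^{-1}$. Since $Q_A$ has orthonormal columns, multiplication by $Q_A$ on the left preserves all singular values, so $\sigma_i(AP) = \sigma_i\big((R R_A^{-1})^{-1}\big)$. Combining the two displays, $\sigma_i(AP) = \sigma_i\big((R R_A^{-1})^{-1}\big)$ and $\sigma_i(SQ_A) = \sigma_i(R R_A^{-1})$; since the singular values of a matrix and of its inverse are reciprocals of each other (in reverse order), the ratio $\sigma_{\max}/\sigma_{\min}$ is the same for a matrix and its inverse. Therefore $\kappa_2(AP) = \kappa_2\big((R R_A^{-1})^{-1}\big) = \kappa_2(R R_A^{-1}) = \kappa_2(SQ_A)$, which is the claim.

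The main thing to be careful about — rather than a deep obstacle — is the bookkeeping of matrix shapes and the uniqueness/essential-uniqueness of the QR factorizations so that "same singular values" is justified. Concretely: $S Q_A \in \mathbb{R}^{s\times n}$ has rank $n$ (since $S$ has linearly independent rows and $Q_A$ has rank $n$, and one needs $SQ_A$ full rank for $B=SA$ to have an invertible $R$; this is consistent with the standing full-rank assumption in the paper), $Q \in \mathbb{R}^{s\times n}$ has orthonormal columns, and $R, R_A \in \mathbb{R}^{n\times n}$ are invertible upper triangular. The factorization $S Q_A = Q (R R_A^{-1})$ then exhibits $SQ_A$ as (orthonormal columns)$\times$(invertible square), from which $(SQ_A)^T(SQ_A) = (R R_A^{-1})^T (R R_A^{-1})$, giving equality of singular values without needing uniqueness at all. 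I would present the argument through this Gram-matrix identity to sidestep any subtlety about signs in QR. No step here is genuinely hard; the proof is a short chain of singular-value-preserving manipulations.
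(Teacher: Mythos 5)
Your argument is correct and is essentially the paper's own proof: both write $AP=Q_AR_AR^{-1}$, use $QR=SQ_AR_A$ to identify $SQ_A=Q(R_AR^{-1})^{-1}$, and conclude via invariance of the condition number under left multiplication by a matrix with orthonormal columns and under inversion. Your extra care with the Gram-matrix identity is fine but not a different route.
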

\begin{proof}
The proof follows the same argument by Meng, Saunders, and Mahoney in~\cite[Lemma 4.2]{Meng2014} and Rokhlin and Tygert in~\cite[Thm. 1]{rokhlin2008fast} Note $AP=Q_AR_AR^{-1}$ so that
$\kappa_2(AP)=\kappa_2(R_AR^{-1})$. Now, since $B=SA=QR=S(Q_AR_A)$, we have that $(SQ_A)R_AR^{-1}=Q$ is orthonormal. Hence, $\kappa_2(R_AR^{-1})=\kappa_2(SQ_A)$. 
\end{proof}

\Cref{lem:exactcond} shows that the main idea behind sketch-and-precondition techniques is excellent in exact arithmetic. In particular, the value of $\kappa_2(AP)$ is independent of $\kappa_2(A)$. In particular, one expects rapid convergence of iterative LS solvers using the preconditioner $P$. While this is correct in exact arithmetic, we have seen that rounding errors cause significant problems for ill-conditioned LS problems (see~\cref{sec:instability}). Regardless, in exact arithmetic, we are left with the task of designing a sketching matrix so that $\kappa_2(SQ_A)$ is close to $1$ with high probability. 

\subsubsection{Sketch-and-solve initialization}\label{sec:sketchsolveinit}
Rokhlin and Tygert~\cite{rokhlin2008fast} included a sketch-and-solve initial guess in the description of their original sketch-and-precondition algorithm. This initial guess is the solution to $\min\|S(Ax-b)\|_2$ computed with a direct solver. It can be computed using the QR decomposition of $SA$, which is necessary in any case. The accuracy of sketch-and-solve solutions are well-studied and bounds are generally of the form~\cite{martinsson2020randomized}
$$\|Ax_{0}-b\|_2\leq \frac{1+\epsilon}{1-\epsilon}\|Ax^*-b\|_2 \quad \text{ for } s\sim n\log(n)/\epsilon^2,$$
where $x_0$ is the sketch-and-solve solution (see~\eqref{eq:sketchedLSproblem}), $x^*$ is the optimal solution to~\eqref{eq:LeastSquaresProblem}, and $s$ is the sketch dimension ($S\in\mathbb{R}^{s\times m})$. Note that $\epsilon$ is the subspace embedding constant of $[A,b]$, the concatenation of $A$ and $b$, and is usually modestly small, say $\epsilon = 0.5$. In the language used in Lemma \ref{lem:exactcond}, $\frac{1+\epsilon}{1-\epsilon} = \kappa_2(SQ_{[A,b]})$, whose value can be bounded using identical arguments to those for $\kappa_2(SQ_A)$, which will be discussed below. It follows that for a sufficiently large sketch dimension, the accuracy (in terms of residual) of the sketch-and-solve solution is of the same order as that of the optimal solution.

\subsection{Sketching matrices}\label{sec:Sketching}
Given~\cref{lem:exactcond}, it is paramount to understand how to construct a sketching matrix $S$ so that $SQ_A$ is well-conditioned.  Matrices that achieve this are called sketching matrices. We generally desire $m\gg s > n$ so that $SA$ has full column rank and computing a preconditioner from $SA$ is computationally efficient. 

There are various ways to construct sketching matrices; almost all are randomly generated. We consider two important types here: (1) Gaussian matrices~\cite{Meng2014, vershynin2010introduction, woodruff2014sketching}, and (2) subsampled randomized trigonometric transforms (SRTTs)~\cite{Avron2010a,martinsson2020randomized, rokhlin2008fast, tropp2011improved}. Other sketching techniques include random sampling~\cite{ipsen2014effect}, sparse embeddings~\cite{clarkson2017low}, and hashing matrices~\cite{cartis2021hashing}.

\subsubsection{Gaussian sketching matrices}
A Gaussian sketching matrix of size $s\times m$ is a matrix with i.i.d.~Gaussian entries of mean $0$ and variance $1/s$. If one selects $s \geq \lceil cn\rceil$ with $c>1$, then one can show that $\kappa_2(SQ_A)$ is a small constant depending on $c$ with high probability~\cite{martinsson2020randomized}. The drawback of Gaussian matrices is the cost of computing $SA$. In particular, $SA$ costs $\mathcal{O}(mn^2)$ operations as $s = \mathcal{O}(n)$. However, Gaussian sketching matrices are often used in theoretical analysis due to the availability of excellent probability theory and concentration of measure arguments~\cite{halko2011finding, martinsson2020randomized}. 

\subsubsection{Subsampled randomized trigonometric transforms}\label{sec:SRTTs}
An SRTT has the form $S = \mathcal{S}FD$, where $D$ is a square diagonal matrix with $\pm 1$ entries at random, $F$ is an orthogonal trigonometric transform (e.g., Fourier, cosine, or Hadamard), and $\mathcal{S}$ is an $s\times m$ scaled sampling matrix with one non-zero entry per row. For an SRTT, if $s \geq \lceil c n\log n\rceil$ with some constant $c>1$, then $SQ_A$ is well-conditioned with high probability~\cite{tropp2011improved}. In practice, the $\log n$ factor in $s$ can often be dropped~\cite{martinsson2020randomized}. Moreover, the matrix-matrix product $SA$ can be computed in $\mathcal{O}(mn\log s)$ operations, although the usual computational cost is $\mathcal{O}(mn\log m)$ associated with computing $\mathcal{S}(F(DA))$, as fast implementations for the SRTT are not readily available. We use subsampled randomized cosine transforms as the sketching matrices in our numerical experiments.

\subsection{Numerical stability analysis}\label{sec:prelimsStability}
Basic arithmetic operations (e.g., $+$, $-$, $\cdot$, and $/$) on a computer are performed with rounding errors due to the finite precision of numbers in floating-point representation. The stability of a numerical algorithm refers to the property to compute solutions with a small backward error~\cite{Higham2002} in the presence of rounding errors. We are particularly interested in proving that sketch-and-apply Blendenpik computes backward stable solutions. The backward error for LS problems is defined in~\cref{eq:defbackwarderrorLS}. A backward stable algorithm computes solutions with a backward error of the same order as the precision.

We assume that computations are performed with a precision of $u\ll 1$ and that numbers and arithmetic operations are exact up to this precision. We now consider three example algorithms: (1) The standard algorithm for matrix-matrix multiplication, (2) Householder QR, and (3) Triangular solve with substitution. We need these results to understand sketch-and-apply LS solvers. 

\subsubsection{Matrix-matrix multiplication} 
Consider matrix-matrix multiplication of two matrices $A\in\mathbb{R}^{m\times n}$ and $B\in\mathbb{R}^{n\times p}$. It is shown that the standard algorithm for computing $AB$ satisfies~\cite[Chapt.~3]{Higham2002}
\begin{equation}
    |AB - {\rm fl}(AB)| \leq \frac{nu}{1-nu}|A||B| = \gamma_n |A||B|,\qquad \gamma_n = \frac{nu}{1-nu},
    \label{eq:matmatmult}
\end{equation}
where ${\rm fl}(AB)$ means that $AB$ is computed with precision $u$ and $|A|$ denotes the entry-wise absolute value of $A$. The inequality in~\cref{eq:matmatmult} is understood as holding for each entry. It can be shown that~\cref{eq:matmatmult} leads to the bound 
\begin{equation}\label{eq:matmatmulterr1}
    \|AB - {\rm fl}(AB)\|_2 \leq \gamma_n\min(\sqrt{m},\sqrt{n})\min(\sqrt{n}, \sqrt{p})\|A\|_2\|B\|_2,
\end{equation}
where $\|\cdot\|_2$ is the spectral norm. The inequality in~\cref{eq:matmatmulterr1} is a forward error in the sense that it shows that ${\rm fl}(AB)$ is close to $AB$. 

\subsubsection{Householder QR factorization} 
Next, we consider the Householder QR of a matrix $A\in\mathbb{R}^{m\times n}$, where $m\geq n$. We denote the triangular factor computed by the Householder QR algorithm as $\hat{R}\in\mathbb{R}^{n\times n}$, and we are interested in the accuracy of $\hat{R}$. By~\cite[Thm.~19.4]{Higham2002}, there exists a matrix $Q\in\mathbb{R}^{m\times n}$ with orthonormal columns such that 
\begin{equation*}
    A + E = Q\hat{R},\qquad \|e_j\|_2\leq \tilde{\gamma}_{mn}\|a_j\|_2, \quad 1\leq j\leq n,
\end{equation*}
where $e_j$ and $a_j$ denote the $j$th columns of $E$ and $A$ respectively, and $\tilde{\gamma}_{mn} = cmnu/(1-cmnu)$ for a small integer constant $c$. A direct consequence is that 
\begin{equation}\label{eq:QRerr}
    \|E\|_F\leq \tilde{\gamma}_{mn}\|A\|_F,
\end{equation}
where $\|\cdot\|_F$ is the matrix Frobenius norm.  The inequality in~\cref{eq:QRerr} has the interpretation that the QR factorization computed by Householder QR is exact for a slightly perturbed matrix $A$. 

\subsubsection{Triangular system solving with substitution} 
Finally, we consider the error arising from solving a triangular system with substitution. Let $R\in\mathbb{R}^{n\times n}$ be a nonsingular upper-triangular matrix. It is known that the computed solution, $\hat{x}$, of the linear system $R^Tx=b$ satisfies~\cite[Sec.~8.2]{Higham2002}
\begin{equation}
    \|x - \hat{x}\|_2\leq \frac{\sqrt{n}\gamma_n\kappa_2(R)}{1-\sqrt{n}\gamma_n\kappa_2(R)}\|x\|_2,
    \label{eq:TriangularError} 
\end{equation}
where $\gamma_n$ is defined in~\cref{eq:matmatmult}. 
The inequality in~\cref{eq:TriangularError} tells us that we expect the relative forward error of the computed solution to be on the order of $\kappa_2(R)$. 

When we do any sketch-and-apply technique, we need to compute $AP$, where $P=R^{-1}$. We compute $AP$ by solving $R^{T}x_i=a_i$ for $1\leq i\leq m$, where $a_i^T$ denotes the $i$th row of $A$ and $x_i^T$ is the $i$th row of $AP$. If we denote the computed solution to $R^{T}x_i=a_i$ by $\hat{x}_i$, then from~\cref{eq:TriangularError} we find that
\begin{align*}
    \sum_{i=1}^m \|x_i-\hat{x}_i\|_2^2 \leq \left[\frac{\sqrt{n}\gamma_n\kappa_2(R)}{1-\sqrt{n}\gamma_n\kappa_2(R)} \right]^2\sum_{i=1}^m\|x_i\|_2^2.
\end{align*}
We conclude that for $P = R^{-1}$ we have
\begin{equation}\label{eq:trisubserr}
    \|AP - \widehat{AP}\|_F \leq \frac{\sqrt{n}\gamma_n\kappa_2(P)}{1-\sqrt{n}\gamma_n\kappa_2(P)}\|AP\|_F,
\end{equation}
where $\widehat{AP}$ denotes the computed matrix-matrix product. Roughly speaking, $\widehat{AP}$ is close to $AP$ when $\kappa_2(P)$ is modest. We additionally bound the backward error resulting from solving $Rx = b$ with back substitution. The computed solution $\hat{x}$ satisfies \cite[Theorem 8.5]{Higham2002}
\begin{equation}\label{eq:backerrbacksub}
(\hat{R} + \Delta \hat{R})\hat{x} = b, \quad \|\Delta \hat{R}\|_2 \leq \gamma_n\sqrt{n} \|\hat{R}\|_2.
\end{equation}

\subsection{Smoothed analysis of condition numbers}\label{sec:SmoothedAnalysis}
Due to rounding errors, most matrices with $\kappa_2(A)>u^{-1}$ are perturbed to a matrix with condition number less than $u^{-1}$ once represented on a computer. 
To explain this, we can model the rounding errors by an additive Gaussian perturbation to $A$. This type of technique fits into the field of smoothed analysis and has been studied in \cite{higham2019new}. More precisely, we suppose that $A\in\mathbb{R}^{m\times n}$ and that we would like to bound the condition number of $A + \sigma G$, where $\sigma$ is a small number and $G$ is a  standard Gaussian matrix with i.i.d.~entries. We have the following statement, which is a corollary of~\cite[Thm.~1.1]{burgisser2010smoothed}.
\begin{corollary}
Let $A\in\mathbb{R}^{m\times n}$ with $m\geq 3n$ and $r\geq 10^{4}$. If $A$ is perturbed to $\hat{A} = A  + \sigma G/\sqrt{m}$ with $\sigma = 8.25\|A\|_2/r$, where $G$ is standard Gaussian matrix with i.i.d.~entries, then
\[
    \mathbb{P}\!\left[\kappa_2(\hat{A}) \geq r\,\right] < 2^{n-m}.
\]
\label{cor:smoothedKappa}
\end{corollary}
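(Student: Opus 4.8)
The plan is to obtain this as a direct consequence of~\cite[Thm.~1.1]{burgisser2010smoothed}: first use the scale invariance of the condition number to reduce to a normalized matrix, then apply the cited smoothed bound, and finally check that the specific constants ($8.25$, $r\ge 10^4$, $m\ge 3n$) make the resulting tail estimate fall below $2^{n-m}$.

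\emph{Step 1 (normalize).} Since $\kappa_2(\cdot)$ is invariant under multiplication by a nonzero scalar, $\kappa_2(\hat A)=\kappa_2(\hat A/\|A\|_2)$ and
\[
\frac{\hat A}{\|A\|_2}=\bar A+\frac{\sigma'}{\sqrt m}\,G,\qquad \bar A:=\frac{A}{\|A\|_2},\quad \sigma':=\frac{\sigma}{\|A\|_2}=\frac{8.25}{r}.
\]
Thus $\|\bar A\|_2=1$, and because $r\ge 10^4$ the effective noise level $\sigma'=8.25/r\le 8.25\times 10^{-4}$ is small, which is what legitimizes invoking the smoothed bound. So it suffices to estimate $\mathbb P[\kappa_2(\bar A+(\sigma'/\sqrt m)G)\ge r]$ with $\|\bar A\|_2=1$.

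\emph{Step 2 (invoke the smoothed bound).} Now apply~\cite[Thm.~1.1]{burgisser2010smoothed} to $\bar A$ with noise level $\sigma'$ and threshold $t=r$. The point is that the quantity $\sigma' t=8.25$ entering that estimate is a fixed constant, so the bound collapses to a constant base raised to the codimension power $m-n+1$, plus an exponentially small term from the large-$\sigma_{\max}$ event. Conceptually this is the standard split $\{\kappa_2(\hat A)\ge r\}\subseteq\{\sigma_{\max}\text{ large}\}\cup\{\sigma_{\min}\text{ small}\}$: the first event is controlled by Gaussian concentration of $\|G/\sqrt m\|_2$, which is $\approx 1$ with overwhelming probability once $m\ge 3n$ (and with room to spare since $\sigma'$ is tiny); the second is the anti-concentration estimate at the heart of~\cite{burgisser2010smoothed}, where for a fixed unit vector $v$ the vector $(\bar A+(\sigma'/\sqrt m)G)v$ is Gaussian with mean $\bar A v$ and covariance $(\sigma'^2/m)I_m$, so $\mathbb P[\|(\bar A+(\sigma'/\sqrt m)G)v\|_2\le \rho]\le \mathbb P[\|N(0,I_m)\|_2\le \rho\sqrt m/\sigma']$, which for $\rho$ of order $1/r$ is $\mathbb P[\|N(0,I_m)\|_2\lesssim \sqrt m/8.25]$ and hence exponentially small in $m$; a net/distance-to-subspace argument then upgrades this to the uniform statement with exponent $m-n+1$.

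\emph{Step 3 (check the constant and conclude).} It remains to verify that, with $\sigma' t=8.25$, the base of the exponential produced by~\cite[Thm.~1.1]{burgisser2010smoothed} is at most $\tfrac12$ — this is exactly what the value $8.25$ is engineered to guarantee. Granting this, $\mathbb P[\kappa_2(\hat A)\ge r]\le(\tfrac12)^{m-n+1}+(\text{tiny})<2^{-(m-n)}=2^{n-m}$, where $m\ge 3n$ supplies the slack ($2^{n-m}\le 2^{-2m/3}$) needed to absorb the residual $\sigma_{\max}$ term below $2^{n-m-1}$, and $r\ge 10^4$ both makes $\sigma'$ small enough for the hypothesis of~\cite[Thm.~1.1]{burgisser2010smoothed} and places $t=r$ in the regime where that theorem applies. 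The main obstacle is purely bookkeeping: aligning the normalization conventions of~\cite{burgisser2010smoothed} (whether their perturbation is written $\bar A+\sigma G$ or $\bar A+(\sigma/\sqrt m)G$, and whether the unit-ball hypothesis is on $\|\bar A\|_2$ or $\|\bar A\|_F$ — in the latter case one rescales $\sigma$ by $\sqrt n$ and again uses $m\ge 3n$) and then tracking their explicit constant carefully enough to confirm that $\sigma' t=8.25$ forces the base below $\tfrac12$.
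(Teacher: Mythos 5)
Your proposal is correct and follows essentially the same route as the paper: normalize so that $\|A\|_2=1$ (the paper absorbs this into the definition of $\sigma$), then invoke~\cite[Thm.~1.1]{burgisser2010smoothed} with the noise level $8.25/(r\sqrt{m})$ and a suitably chosen threshold so that the resulting tail bound is $\le 2^{n-m}$. The paper's proof is equally terse on the constant-checking (it simply records the substitution $z=r(1-\lambda)/e$ and notes that the cited theorem is stated for the transposed, $m<n$ case), so your ``granting this'' step matches the level of detail actually supplied.
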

\begin{proof} The result immediately follows by setting $z = (r(1-\lambda))/e$ and $\sigma = 8.25/(r\sqrt{m})$ in Theorem 1.1 of~\cite{burgisser2010smoothed}. This takes into account that Theorem 1.1 of~\cite{burgisser2010smoothed} considers the case when $m<n$. The extra $\|A\|_2$ factor of $\sigma$ in the statement of the corollary ensures that our final result continues to hold when $\|A\|_2\neq 1$.
\end{proof} 
\Cref{cor:smoothedKappa} shows us that a small additive Gaussian perturbation to a matrix $A$ ensures that the condition number is small with high probability. It partially explains why matrices represented in floating-point arithmetic rarely have a condition number $>u^{-1}$. In~\cref{sec:SmoothedAnalysisMain}, we use~\cref{cor:smoothedKappa} to explain why sketch-and-apply techniques continue to deliver accurate LS solutions, even when we have $\kappa_2(A)>u^{-1}$. 

\section{On the numerical stability of sketch-and-apply Blendenpik}\label{sec:condition}
By the properties of sketching matrices and~\cref{lem:exactcond}, we know that if one computes $P$ and $AP$ in exact arithmetic, then $\kappa_2(AP)$ is bounded by a constant independent of $\kappa_2(A)$ with high probability. In finite precision, one typically needs to be more careful. In this section, we show that when $\kappa_2(A)\ll u^{-1}$, the condition number of the computed matrix $AP$ can also be bounded by a constant with high probability (see~\cref{thm:mainTheoremYhat}). Unfortunately, such a bound does not extend to the case when $\kappa_2(A)> u^{-1}$ (see~\cref{sec:SmoothedAnalysisMain}). 

There are three main steps to compute $AP$. In finite precision, the computed matrices satisfy the following:
\begin{enumerate}
    \item $\hat{B}=SA + E_1$,
    \item $\tilde{Q}\hat{R} = \hat{B} + E_2$,
    \item $\hat{Y} = A\hat{R}^{-1} + \Delta Y$,
\end{enumerate}
where hats denote that we are accounting for rounding errors.
The matrix-matrix multiplication error $E_1$ can be bounded using~\cref{eq:matmatmulterr1}. We have a bound on the error arising from the Householder QR algorithm $E_2$ in~\cref{eq:QRerr}. Note that $\tilde{Q}$ has orthonormal columns but is not equal to $Q$, where $Q$ is the orthonormal factor of the exact QR factorization of $B = SA$. Finally, the error term $\Delta Y$ in the step resulting from solving the triangular matrix system can be bounded using~\cref{eq:trisubserr}. After all these errors, we want to derive a bound on $\kappa_2(\hat{Y})$, as $\hat{Y}$ is the computed preconditioned matrix in finite precision.

\subsection{Bounding $\mathbf{\boldsymbol\kappa_2(\hat{Y})}$ in terms of numerical errors}
First, let $E = E_1 + E_2$ be the sum of the additive errors from the matrix-matrix multiplication and the QR factorization, such that $\tilde{Q}\hat{R}  = QR + E$. We show in~\cref{lem:condsRandARinv} that the quantity $\varepsilon_1 := \|E\|_2\|R^{-1}\|_2$ controls the numerical error in the first two steps.

\begin{lemma}\label{lem:condsRandARinv}
    Let $A = Q_AR_A$ and $SA = QR$ be economized QR factorizations of $A$ and $B = SA$, respectively. Let $\hat{B}$ be the computed $SA$ and $\hat{R}$ be the computed upper triangular factor from the Householder QR algorithm of $\hat{B}$, both computed in finite precision. Let $\tilde{Q}$ be a matrix with orthonormal columns such that $E = \tilde{Q}\hat{R} - QR$ is the numerical error associated with $\hat{R}$. Define $\varepsilon_1 := \| E \|_2\|R^{-1}\|_2$. Assuming that $\varepsilon_1<1$, we find that $\hat{R}$ is invertible and $A\hat{R}^{-1}$ is full rank. Furthermore,
    $$\kappa_2(\hat{R})\leq \frac{\kappa_2(R) + \varepsilon_1}{1-\varepsilon_1}\leq \frac{\kappa_2(SQ_A)\kappa_2(A) + \varepsilon_1}{1-\varepsilon_1},$$
    and
    $$\kappa_2(A\hat{R}^{-1})\leq \kappa_2(SQ_A)\frac{1 + \varepsilon_1}{1-\varepsilon_1}.$$
\end{lemma}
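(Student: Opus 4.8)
The plan is to control $\kappa_2(\hat R)$ and $\kappa_2(A\hat R^{-1})$ by writing $\hat R = \tilde Q^T(QR + E)$ and viewing $QR + E$ as a multiplicative perturbation of $QR$, namely $QR + E = (I + ER^{-1}Q^T)QR$. First I would establish invertibility of $\hat R$: since $\tilde Q$ has orthonormal columns, $\hat R$ is invertible iff $QR + E$ is, and $QR + E = (I + F)QR$ with $F := E(QR)^{-1}$. Because $\|F\|_2 \le \|E\|_2\|R^{-1}\|_2 = \varepsilon_1 < 1$ (using $\|(QR)^{-1}\|_2 = \|R^{-1}Q^T\|_2 = \|R^{-1}\|_2$ as $Q$ has orthonormal columns), $I+F$ is invertible by the Neumann series, so $\hat R$ is invertible, and consequently $A\hat R^{-1}$ has full rank since $A$ does.

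Next, for the bound on $\kappa_2(\hat R)$: from $\hat R = \tilde Q^T(I+F)QR$ I get $\|\hat R\|_2 \le \|(I+F)\|_2\|R\|_2 \le (1+\varepsilon_1)\|R\|_2$ and $\|\hat R^{-1}\|_2 = \|R^{-1}Q^T(I+F)^{-1}\tilde Q\|_2 \le \|R^{-1}\|_2\|(I+F)^{-1}\|_2 \le \|R^{-1}\|_2/(1-\varepsilon_1)$. Multiplying gives $\kappa_2(\hat R) \le \kappa_2(R)(1+\varepsilon_1)/(1-\varepsilon_1)$. This is slightly sharper than the stated bound, so I would then weaken it to $(\kappa_2(R)+\varepsilon_1)/(1-\varepsilon_1)$ by noting $\kappa_2(R)(1+\varepsilon_1) \le \kappa_2(R) + \varepsilon_1\kappa_2(R)$ and... actually here one needs $\kappa_2(R)\varepsilon_1 \le \varepsilon_1$, which is false. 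The cleaner route is to bound $\|\hat R\|_2 \le \|R\|_2 + \|E\|_2$ directly (triangle inequality on $\hat R = \tilde Q^T(QR+E)$, using $\|\tilde Q^T M\|_2 \le \|M\|_2$) and $\|\hat R^{-1}\|_2 \le \|R^{-1}\|_2/(1-\varepsilon_1)$ as above, so $\kappa_2(\hat R) \le (\|R\|_2 + \|E\|_2)\|R^{-1}\|_2/(1-\varepsilon_1) = (\kappa_2(R)+\varepsilon_1)/(1-\varepsilon_1)$. The second inequality in that line then follows from $\kappa_2(R) = \kappa_2(SQ_A)\kappa_2(R_A) = \kappa_2(SQ_A)\kappa_2(A)$, which is exactly the content of the proof of \cref{lem:exactcond} (since $A = Q_AR_A$ with $Q_A$ orthonormal gives $\kappa_2(R_A)=\kappa_2(A)$).

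Finally, for $\kappa_2(A\hat R^{-1})$: I would write $A\hat R^{-1} = A R^{-1} \cdot R\hat R^{-1}$ and use $R\hat R^{-1} = R\,R^{-1}Q^T(I+F)^{-1}\tilde Q = Q^T(I+F)^{-1}\tilde Q$. Hence $\|R\hat R^{-1}\|_2 \le \|(I+F)^{-1}\|_2 \le 1/(1-\varepsilon_1)$ and $\|\hat R R^{-1}\|_2 = \|\tilde Q^T(I+F)Q\|_2 \le 1+\varepsilon_1$, so $\kappa_2(R\hat R^{-1}) \le (1+\varepsilon_1)/(1-\varepsilon_1)$. Since $\kappa_2(AR^{-1}) = \kappa_2(Q_AR_AR^{-1}) = \kappa_2(R_AR^{-1}) = \kappa_2(SQ_A)$ by \cref{lem:exactcond}, submultiplicativity of the condition number gives $\kappa_2(A\hat R^{-1}) \le \kappa_2(AR^{-1})\kappa_2(R\hat R^{-1}) \le \kappa_2(SQ_A)(1+\varepsilon_1)/(1-\varepsilon_1)$. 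The main obstacle I anticipate is purely bookkeeping: making sure every "orthonormal columns" factor is dropped on the correct side (it is the left factor in $\|\tilde Q^T M\|$ and the right factor in $\|M\tilde Q\|$ that can be peeled off for free), and that the $\varepsilon_1<1$ hypothesis is genuinely all that is needed — no hidden dependence on $\kappa_2(A)$ creeps in, which is precisely the point of the lemma.
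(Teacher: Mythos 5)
Your proposal is correct and follows essentially the same route as the paper: invertibility of $\hat R$ and the bound on $\kappa_2(\hat R)$ come from the additive perturbation estimates $\|\hat R\|_2=\|QR+E\|_2\leq\|R\|_2+\|E\|_2$ and $\sigma_{\min}(\hat R)\geq\sigma_{\min}(R)-\|E\|_2$, while the bound on $\kappa_2(A\hat R^{-1})$ comes from viewing $\tilde Q\hat R=(I+ER^{-1}Q^T)QR$ as a multiplicative perturbation, which is the same mechanism as the paper's two-sided singular-value bound via $A\hat R^{-1}\tilde Q^T=AR^{-1}(Q+ER^{-1})^{\dagger}$. One imprecision to fix: $\kappa_2(R)=\kappa_2(SQ_A)\kappa_2(A)$ is not an equality and is not what \cref{lem:exactcond} asserts (that lemma concerns $\kappa_2(R_AR^{-1})$); the correct and sufficient statement is $\kappa_2(R)\leq\kappa_2(SQ_A)\kappa_2(A)$, obtained from $\|R\|_2=\|SQ_AR_A\|_2\leq\|SQ_A\|_2\|A\|_2$ together with $\sigma_{\min}(R)=\sigma_{\min}(SQ_AR_A)\geq\sigma_{\min}(SQ_A)\sigma_{\min}(A)$.
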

\begin{proof}
We have
\begin{equation}\label{eq:boundhatRB1}
    \|\hat{R}\|_2 = \|\tilde{Q}\hat{R}\|_2 = \|QR + E\|_2\leq \|R\|_2 + \|E\|_2,
\end{equation}
and
\begin{equation}\label{eq:boundhatRB2}
    \sigma_{\min}(\hat{R}) = \sigma_{\min}(QR + E) \geq \sigma_{\min}(R) - \|E\|_2.
\end{equation}
We see that $\sigma_{\min}(\hat{R})>0$ provided that $\varepsilon_1<1$. We can combine~\cref{eq:boundhatRB1} and~\cref{eq:boundhatRB2} together to obtain an upper bound on $\kappa_2(\hat{R})$. To bound $\kappa_2(A\hat{R}^{-1})$, we use the fact that\footnote{The `$\dagger$' superscript on a matrix denotes the matrix pseudo-inverse.} $A\hat{R}^{-1}\tilde{Q}^T = AR^{-1}(Q+ER^{-1})^{\dagger}$ to find that for any $v \in \mathbb{R}^m$, we have
\begin{equation}
\|v^TAR^{-1}\|_2 (1+\varepsilon_1)^{-1} \leq \|v^TA\hat{R}^{-1}\|_2 \leq \|v^TAR^{-1}\|_2 (1-\varepsilon_1)^{-1}.
\end{equation}
The bound follows as $\kappa_2(AR^{-1}) = \kappa_2(SQ_A)$ (see~\cref{lem:exactcond}).
\end{proof}
\cref{lem:condsRandARinv} shows us, assuming $\varepsilon_1$ is sufficiently small, that $\kappa_2(\hat{R})$ cannot be larger than about $\kappa_2(SQ_A)\kappa_2(A)$, and $\kappa_2(A\hat{R}^{-1})$ cannot be much larger than $\kappa_2(SQ_A)$.

As the following lemma shows, the error incurred by solving the triangular system depends on the quantities in~\cref{lem:condsRandARinv}.
\begin{lemma}\label{lem:condYhatinitial}
    Let $\hat{R}^{-1}$ be the upper triangular matrix defined in~\cref{lem:condsRandARinv} and let $\hat{Y}$ be the matrix obtained by computing $A\hat{R}^{-1}$ with forward substitution in finite precision. Define $\Delta Y = \hat{Y} - A\hat{R}^{-1}$ and $\varepsilon_2 = \|\Delta Y\|_2 \|(A\hat{R}^{-1})^{\dagger}\|_2$. Assuming that $\varepsilon_2 < 1$, we find that $\hat{Y}$ is full rank and
\[
        \kappa_2(\hat{Y}) \leq \frac{\kappa_2(A\hat{R}^{-1}) + \varepsilon_2}{1-\varepsilon_2}.
\]
\end{lemma}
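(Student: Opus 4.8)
The plan is to mirror the proof of \cref{lem:condsRandARinv} almost verbatim, replacing the roles $(QR, E, R^{-1})$ by $(A\hat{R}^{-1}, \Delta Y, (A\hat{R}^{-1})^\dagger)$. The key observation is that $\hat{Y} = A\hat{R}^{-1} + \Delta Y$ is an additive perturbation of the full-rank matrix $A\hat{R}^{-1}$ (full rank by \cref{lem:condsRandARinv}, under the assumption $\varepsilon_1<1$), and $\varepsilon_2$ is precisely the relative size of that perturbation measured against $\sigma_{\min}(A\hat{R}^{-1}) = 1/\|(A\hat{R}^{-1})^\dagger\|_2$.

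First I would bound the largest singular value: $\|\hat{Y}\|_2 = \|A\hat{R}^{-1} + \Delta Y\|_2 \leq \|A\hat{R}^{-1}\|_2 + \|\Delta Y\|_2$. Next I would bound the smallest singular value using Weyl's inequality for singular values, $\sigma_{\min}(\hat{Y}) = \sigma_{\min}(A\hat{R}^{-1} + \Delta Y) \geq \sigma_{\min}(A\hat{R}^{-1}) - \|\Delta Y\|_2$; since $\|\Delta Y\|_2 = \varepsilon_2 \sigma_{\min}(A\hat{R}^{-1})$, this gives $\sigma_{\min}(\hat{Y}) \geq (1-\varepsilon_2)\sigma_{\min}(A\hat{R}^{-1}) > 0$ under the hypothesis $\varepsilon_2 < 1$, which simultaneously establishes that $\hat{Y}$ is full rank. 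Dividing the two bounds, and writing $\|\Delta Y\|_2 = \varepsilon_2\sigma_{\min}(A\hat R^{-1})$ in the numerator as well, yields
\[
\kappa_2(\hat{Y}) = \frac{\|\hat{Y}\|_2}{\sigma_{\min}(\hat{Y})} \leq \frac{\|A\hat{R}^{-1}\|_2 + \varepsilon_2\,\sigma_{\min}(A\hat{R}^{-1})}{(1-\varepsilon_2)\,\sigma_{\min}(A\hat{R}^{-1})} = \frac{\kappa_2(A\hat{R}^{-1}) + \varepsilon_2}{1-\varepsilon_2},
\]
which is the claimed bound.

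There is essentially no hard part here; the statement is a clean ``perturbation does not change the condition number much'' lemma and the entire content is two applications of Weyl's inequality plus the definition of $\varepsilon_2$. The only point requiring a little care is making sure the perturbation bound is stated relative to $\sigma_{\min}(A\hat{R}^{-1})$ rather than $\sigma_{\min}(\hat Y)$, so that the $(1-\varepsilon_2)$ factor lands in the denominator cleanly; this is exactly why $\varepsilon_2$ is defined with the pseudo-inverse $(A\hat{R}^{-1})^\dagger$ rather than $\hat Y^\dagger$. One should also note that the full-rankness of $A\hat{R}^{-1}$ — needed for $(A\hat{R}^{-1})^\dagger$ and $\sigma_{\min}(A\hat R^{-1})>0$ to be meaningful — is inherited from \cref{lem:condsRandARinv}, so the chain of assumptions $\varepsilon_1<1$ and then $\varepsilon_2<1$ is what the whole argument rests on.
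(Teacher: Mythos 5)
Your proof is correct and follows exactly the route the paper intends: the paper's own proof is a one-line remark that the result ``follows immediately from $\hat{Y} = A\hat{R}^{-1} + \Delta Y$ and similar reasoning to the proof of Lemma~\ref{lem:condsRandARinv}'', which is precisely the two Weyl-type singular value perturbation bounds you spell out. Your observation that $\|\Delta Y\|_2 = \varepsilon_2\,\sigma_{\min}(A\hat{R}^{-1})$ and that full-rankness of $A\hat{R}^{-1}$ is inherited from Lemma~\ref{lem:condsRandARinv} fills in the details the paper leaves implicit.
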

\begin{proof}
    This follows immediately from the relationship $\hat{Y} = A\hat{R}^{-1} + \Delta Y$ and similar reasoning to the proof of~\cref{lem:condsRandARinv}.
\end{proof}
In exact arithmetic, clearly $\kappa_2(Y)=\kappa_2(AR^{-1})$. The numerical error incurred due to the triangular solve is controlled by $\|\Delta Y\|$, which depends on $\kappa_2(\hat{R})$ and $\kappa_2(A\hat{R}^{-1})$. We show that the error is small under the assumption that $\kappa_2(A)u\ll 1$. As a result, the condition number of $\hat{Y}$ will be bounded by a small constant related to the conditioning of $S$ and $SQ_A$. Thus, the computed $AP$ has a reasonable condition number provided that the numerical errors $\varepsilon_1$ and $\varepsilon_2$ are small.
\subsection{Bounding the numerical errors}
We now bound the errors $\varepsilon_1$ and $\varepsilon_2$.
\begin{lemma}\label{lem:boundvarepsilons}
    Assume the same setup as in~\cref{lem:condsRandARinv} and~\cref{lem:condYhatinitial}, where $\varepsilon_1 = \|E\|_2\|R^{-1}\|_2$ and $\varepsilon_2 = \|\Delta Y\|_2\|(A\hat{R}^{-1})^{\dagger}\|_2$, and all quantities in finite precision are computed with unit round-off $u$. Furthermore, define 
\begin{equation}\label{eq:defconstantsC1kX}
C_1 := \sqrt{sn}\gamma_m + \sqrt{n}\tilde{\gamma}_{sn}(1+\sqrt{sn}\gamma_m), \qquad k(S) := \|S\|_2\|(SQ_A)^{\dagger}\|_2.
\end{equation}
If $\kappa_2(A)<1/(C_1k(S))$, then we have
\[
    \varepsilon_1\leq C_1k(S)\kappa_2(A),
\]
and
\[
    \varepsilon_2\leq \frac{n\gamma_n\kappa_2(SQ_A)(\kappa_2(A)\kappa_2(SQ_A) + \varepsilon_1)(1 + \varepsilon_1)}{\left(1 - \varepsilon_1 - \sqrt{n}\gamma_n(\kappa_2(A)\kappa_2(SQ_A) + \varepsilon_1)\right)(1-\varepsilon_1)}.
\]
\end{lemma}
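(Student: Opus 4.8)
The plan is to bound $\varepsilon_1$ and $\varepsilon_2$ by tracking the finite-precision error sources introduced in the three-step decomposition of \cref{sec:condition} and then invoking the condition-number bounds from \cref{lem:condsRandARinv} and \cref{lem:condYhatinitial}. For $\varepsilon_1 = \|E\|_2\|R^{-1}\|_2$, I would first split $E = E_1 + E_2$, where $E_1$ is the matrix–matrix multiplication error in forming $\hat{B} = SA + E_1$ and $E_2$ is the Householder QR backward error in $\tilde{Q}\hat{R} = \hat{B} + E_2$. Bound $\|E_1\|_2$ via \cref{eq:matmatmulterr1}, which gives something like $\|E_1\|_2 \le \sqrt{sn}\,\gamma_m\|S\|_2\|A\|_2$ (using the $m$ inner dimension and taking the crude $\min$'s as $\sqrt{s},\sqrt{n}$). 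Bound $\|E_2\|_2 \le \|E_2\|_F \le \tilde{\gamma}_{sn}\|\hat{B}\|_F \le \sqrt{n}\,\tilde{\gamma}_{sn}\|\hat{B}\|_2$ via \cref{eq:QRerr}, and then control $\|\hat{B}\|_2 \le \|SA\|_2 + \|E_1\|_2 \le (1 + \sqrt{sn}\,\gamma_m)\|S\|_2\|A\|_2$. Adding these and factoring out $\|S\|_2\|A\|_2$ produces exactly the constant $C_1$ in \cref{eq:defconstantsC1kX}. Finally, multiply by $\|R^{-1}\|_2$: since $R$ is the triangular factor of $B = SA = (SQ_A)R_A$, we have $\|R^{-1}\|_2 = \sigma_{\min}(SA)^{-1} \le \sigma_{\min}(SQ_A)^{-1}\sigma_{\min}(A)^{-1} = \|(SQ_A)^{\dagger}\|_2\|A^{\dagger}\|_2$, so $\|S\|_2\|A\|_2\|R^{-1}\|_2 \le k(S)\kappa_2(A)$, giving $\varepsilon_1 \le C_1 k(S)\kappa_2(A)$. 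The hypothesis $\kappa_2(A) < 1/(C_1 k(S))$ is precisely what makes $\varepsilon_1 < 1$, so \cref{lem:condsRandARinv} applies.

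For $\varepsilon_2 = \|\Delta Y\|_2\|(A\hat{R}^{-1})^{\dagger}\|_2$, I would use the forward-substitution error bound \cref{eq:trisubserr}, but with $\hat{R}$ in place of the exact $R$: solving $\hat{R}^Tx_i = a_i$ for each row yields $\|\Delta Y\|_F \le \dfrac{\sqrt{n}\,\gamma_n\,\kappa_2(\hat{R})}{1 - \sqrt{n}\,\gamma_n\,\kappa_2(\hat{R})}\|A\hat{R}^{-1}\|_F$. Converting Frobenius to spectral norm costs a $\sqrt{n}$ (turning $\sqrt{n}\gamma_n$ into $n\gamma_n$ in the numerator), and $\|A\hat{R}^{-1}\|_2\|(A\hat{R}^{-1})^{\dagger}\|_2 = \kappa_2(A\hat{R}^{-1})$. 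Now substitute the two bounds from \cref{lem:condsRandARinv}: $\kappa_2(\hat{R}) \le (\kappa_2(SQ_A)\kappa_2(A) + \varepsilon_1)/(1-\varepsilon_1)$ and $\kappa_2(A\hat{R}^{-1}) \le \kappa_2(SQ_A)(1+\varepsilon_1)/(1-\varepsilon_1)$. Plugging these in and clearing the nested denominators — the $(1-\varepsilon_1)$ from $\kappa_2(\hat R)$ combines with the $1 - \sqrt{n}\gamma_n\kappa_2(\hat R)$ term to give $1 - \varepsilon_1 - \sqrt{n}\gamma_n(\kappa_2(A)\kappa_2(SQ_A) + \varepsilon_1)$, and the $(1-\varepsilon_1)$ from $\kappa_2(A\hat R^{-1})$ stays — yields exactly the claimed expression for $\varepsilon_2$.

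The main obstacle I anticipate is bookkeeping rather than conceptual: getting the Frobenius-vs-spectral conversions consistent so that the constants collapse cleanly into $C_1$ and into the stated $\varepsilon_2$ bound, and being careful that the perturbed factor $\hat{R}$ (not the exact $R$) is what appears in the triangular-solve estimate — this is legitimate because \cref{eq:trisubserr} is a statement about solving a triangular system with whatever triangular matrix is given, and here that matrix is the already-computed $\hat{R}$. One also has to verify that the denominators stay positive; this is implicitly guaranteed by the hypothesis $\kappa_2(A) < 1/(C_1 k(S))$ together with the running assumptions $\varepsilon_1 < 1$ and $\varepsilon_2 < 1$ inherited from the two preceding lemmas, but I would state explicitly that the bound is vacuous (the right-hand side negative or undefined) unless $\sqrt{n}\gamma_n(\kappa_2(A)\kappa_2(SQ_A) + \varepsilon_1) < 1 - \varepsilon_1$, which holds in the regime $\kappa_2(A)u \ll 1$ of interest.
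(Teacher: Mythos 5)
Your proposal is correct and follows essentially the same route as the paper: the same splitting $E=E_1+E_2$ with \cref{eq:matmatmulterr1} and \cref{eq:QRerr} collapsing into $C_1$, the bound $\sigma_{\min}(R)\ge\sigma_{\min}(SQ_A)\sigma_{\min}(A)$ for $\varepsilon_1$, and the forward-substitution estimate \cref{eq:trisubserr} applied to $\hat{R}$ combined with the $\kappa_2(\hat{R})$ and $\kappa_2(A\hat{R}^{-1})$ bounds of \cref{lem:condsRandARinv} for $\varepsilon_2$. Your explicit remark about the denominator needing to stay positive is a sensible addition that the paper leaves implicit.
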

\begin{proof}
    We first note that $E = E_1 + E_2$, where $E_1 = \hat{B} - SA$ is the error arising from matrix-matrix multiplication and $E_2 = \tilde{Q}\hat{R} - \hat{B}$ is the error arising from the Householder QR algorithm.
    We can bound these terms using~\cref{eq:matmatmulterr1} and~\cref{eq:QRerr} to find
    \begin{align*}
        \|E\|_2  &\leq \|E_1\|_2 + \sqrt{n}\tilde{\gamma}_{sn}\|SA + E_1\|_2 \\
        &\leq \sqrt{n}\tilde{\gamma}_{sn}\|S\|_2\|A\|_2 + (1+\sqrt{n}\tilde{\gamma}_{sn})\sqrt{sn}\gamma_m\|S\|_2\|A\|_2  = C_1\|S\|_2\|A\|_2.
    \end{align*}
    This is then combined with 
    \begin{equation*}
        \sigma_{\min}(R) = \sigma_{\min}(QR) = \sigma_{\min}(SQ_AR_A) \geq \sigma_{\min}(SQ_A)\sigma_{\min}(A).
    \end{equation*}
 As for $\|\Delta Y\|_2$, by~\cref{eq:trisubserr} we have
\begin{equation*}
    \|\Delta Y\|_2 \leq \|\Delta Y\|_F\leq \frac{\sqrt{n}\gamma_n\kappa_2(\hat{R})}{1 - \sqrt{n}\gamma_n\kappa_2(\hat{R})}\|A\hat{R}^{-1}\|_F \leq \phi(\hat{R}) \|A\hat{R}^{-1}\|_2,
\end{equation*}
where 
\begin{equation}\label{eq:definitionphiRB}\phi(\hat{R}) := \frac{n\gamma_n\kappa_2(\hat{R})}{1-\sqrt{n}\gamma_n\kappa_2(\hat{R})} \leq \frac{n\gamma_n(\kappa_2(A)\kappa_2(SQ_A) + \varepsilon_1)}{1-\varepsilon_1-\sqrt{n}\gamma_n(\kappa_2(A)\kappa_2(SQ_A) + \varepsilon_1)}.\end{equation}
We have $\varepsilon_2\leq \phi(\hat{R})\kappa_2(A\hat{R}^{-1})$, which can 
be bounded using the result on $\kappa_2(A\hat{R}^{-1})$ in~\cref{lem:condsRandARinv}.
\end{proof}
The condition that $\kappa_2(A) < 1/(C_1k(S))$ is closely related to the assumption $\kappa_2(A) \ll u^{-1}$ (see~\cref{subsec:boundingkappaYhat} for further discussion), which ensures that $\varepsilon_1<\kappa_2(A)u \ll 1$. The bound $\varepsilon_2\leq\phi(\hat{R})\kappa_2(A\hat{R}^{-1})$, where $\phi(\hat{R})$ is defined in~\cref{eq:definitionphiRB}, tells us that the dominant term in the bound for $\varepsilon_2$ is $n^2\kappa_2(SQ_A)\kappa_2(A)u$, when $\varepsilon_1<1$.

\subsection{Bounding $\mathbf{\boldsymbol\kappa_2(\hat{Y})}$}\label{subsec:boundingkappaYhat} We now combine \cref{lem:condsRandARinv,lem:condYhatinitial,lem:boundvarepsilons} to obtain a bound on the condition number of $\hat{Y}$, which is the computed version of $AR^{-1}$. We find that $\kappa_2(\hat{Y})\leq 4\kappa_2(SQ_A) + 1$ provided that $
\kappa_2(A)$ is sufficiently small. 

\begin{theorem}\label{thm:mainTheoremYhat}
    Let $A\in\mathbb{R}^{m\times n}$ have linearly independent columns, $S\in\mathbb{R}^{s\times n}$ have linearly independent rows, and $m>s>n$. Let $A = Q_AR_A$ and $SA = QR$ be the economized QR factorizations of $A$ and $B = SA$, respectively. Assume that
    \begin{equation}\label{eq:assumptionsMainThmYhat}
        \kappa_2(SQ_A) <49 ,\quad n^2u < 1/201, \quad \frac{m}{n}+\sqrt{n} > 200,
    \end{equation}
    and that $\kappa_2(A)$ is sufficiently small so that
    \begin{equation}\label{eq:assumptionKappaAMainThmYhat}
        \kappa_2(A) < \frac{1}{3C_1k(S)},
    \end{equation} 
    where $C_1 = \mathcal{O}((\sqrt{sn}m + sn^{3/2})u)$ and $k(S)$ are defined in~\cref{eq:defconstantsC1kX}. 
    Compute the preconditioned matrix $\hat{Y}$ as detailed in~\cref{lem:condsRandARinv} and~\cref{lem:condYhatinitial}.\footnote{That is, compute the matrix-matrix product $SA$, compute its $R$ factor by the Householder QR algorithm, and finally compute the preconditioned matrix by solving $AR^{-1}$ with forward substitution.} Then,
\[
        \kappa_2(\hat{Y})\leq 4\kappa_2(SQ_A) + 1.
\]
\end{theorem}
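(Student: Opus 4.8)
The plan is to chain \cref{lem:condsRandARinv,lem:condYhatinitial,lem:boundvarepsilons} together and reduce the whole statement to two parameter estimates, $\varepsilon_1 \le 1/3$ and $\varepsilon_2 \le 1/2$. Once these hold, \cref{lem:condsRandARinv} gives $\kappa_2(A\hat{R}^{-1}) \le \kappa_2(SQ_A)(1+\varepsilon_1)/(1-\varepsilon_1) \le 2\kappa_2(SQ_A)$, and since $t\mapsto (c+t)/(1-t)$ is increasing for $t<1$ and $c>0$, \cref{lem:condYhatinitial} gives $\kappa_2(\hat{Y}) \le (\kappa_2(A\hat{R}^{-1})+\varepsilon_2)/(1-\varepsilon_2) \le (2\kappa_2(SQ_A)+1/2)/(1/2) = 4\kappa_2(SQ_A)+1$, which is exactly the claim. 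The constants $4$ and $1$ are precisely what is left over from $\varepsilon_1 = 1/3$ and $\varepsilon_2 = 1/2$, so the proof really is the verification of those two bounds, together with the caveats $\varepsilon_i<1$ and $\kappa_2(A)<1/(C_1k(S))$ needed to invoke the lemmas, all of which follow a fortiori from \cref{eq:assumptionsMainThmYhat,eq:assumptionKappaAMainThmYhat}.

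The bound on $\varepsilon_1$ is immediate: \cref{lem:boundvarepsilons} gives $\varepsilon_1 \le C_1 k(S)\kappa_2(A)$ whenever $\kappa_2(A)<1/(C_1k(S))$, and \cref{eq:assumptionKappaAMainThmYhat} assumes the stronger $\kappa_2(A)<1/(3C_1k(S))$, so $\varepsilon_1<1/3$. The real work is $\varepsilon_2$, and the key step is to convert the abstract condition \cref{eq:assumptionKappaAMainThmYhat} into the concrete inequality $n^2 u\,\kappa_2(SQ_A)\,\kappa_2(A)<1/600$. I would do this in two moves. First, $k(S)=\|S\|_2\|(SQ_A)^{\dagger}\|_2 \ge \|SQ_A\|_2\|(SQ_A)^{\dagger}\|_2 = \kappa_2(SQ_A)$, since $Q_A$ has orthonormal columns. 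Second, from the definition of $C_1$ in \cref{eq:defconstantsC1kX} and $s\ge n$, one has $C_1 \ge \sqrt{sn}\gamma_m + \sqrt{n}\tilde{\gamma}_{sn} \ge nmu + sn^{3/2}u \ge n^2 u(m/n+\sqrt{n})$, whence the third assumption in \cref{eq:assumptionsMainThmYhat} forces $C_1 > 200\,n^2 u$. Combining these with \cref{eq:assumptionKappaAMainThmYhat} yields $200\,n^2 u\,\kappa_2(SQ_A)\,\kappa_2(A) \le C_1 k(S)\kappa_2(A) < 1/3$, that is, $n^2 u\,\kappa_2(SQ_A)\,\kappa_2(A)<1/600$.

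With $\varepsilon_1<1/3$, $\kappa_2(SQ_A)<49$, $n^2u<1/201$, and the bound just derived in hand, every term in the explicit estimate for $\varepsilon_2$ from \cref{lem:boundvarepsilons} is controlled: using $n\gamma_n<\tfrac{201}{200}n^2u$, the numerator $n\gamma_n\kappa_2(SQ_A)(\kappa_2(A)\kappa_2(SQ_A)+\varepsilon_1)(1+\varepsilon_1)$ splits into a term $\lesssim \tfrac{201}{200}\kappa_2(SQ_A)\cdot(n^2u\,\kappa_2(SQ_A)\,\kappa_2(A))$ and a term $\lesssim \tfrac{201}{200}\kappa_2(SQ_A)\cdot n^2u\cdot\varepsilon_1$, each below $\tfrac{49}{600}$, while the subtracted piece $\sqrt{n}\gamma_n(\kappa_2(A)\kappa_2(SQ_A)+\varepsilon_1)$ in the denominator carries an extra $n^{-1/2}$ and is negligible, so the denominator exceeds $(1-1/3)(1-1/3)$ up to a tiny correction. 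Tallying constants gives $\varepsilon_2<1/2$ (only just below, around $0.495$ in the worst case). This last step is the main obstacle: the numbers $49$, $1/201$, and $200$ in \cref{eq:assumptionsMainThmYhat} are calibrated so the arithmetic lands on $\varepsilon_2\le 1/2$, and a looser accounting anywhere would overshoot and break the clean $4\kappa_2(SQ_A)+1$ conclusion; the rest is bookkeeping-free substitution.
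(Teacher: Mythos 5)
Your proposal is correct and follows essentially the same route as the paper: chain \cref{lem:condsRandARinv,lem:condYhatinitial,lem:boundvarepsilons}, use \cref{eq:assumptionKappaAMainThmYhat} to get $\varepsilon_1<1/3$ and hence $\kappa_2(A\hat{R}^{-1})\leq 2\kappa_2(SQ_A)$, then push the remaining assumptions through the $\varepsilon_2$ bound to get $\varepsilon_2<1/2$ and conclude $(2\kappa_2(SQ_A)+1/2)/(1/2)=4\kappa_2(SQ_A)+1$. The only difference is bookkeeping for $\varepsilon_2$: the paper routes through $\kappa_2(\hat{R})$ and shows $\phi(\hat{R})<(4\kappa_2(SQ_A))^{-1}$, whereas you extract $n^2u\,\kappa_2(SQ_A)\,\kappa_2(A)<1/600$ from $k(S)\geq\kappa_2(SQ_A)$ and $C_1>200n^2u$ and substitute directly --- both rest on the same two observations and both land just under $1/2$.
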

\begin{proof} Throughout the proof, we assume the notation introduced in~\cref{lem:condsRandARinv,lem:condYhatinitial,lem:boundvarepsilons}. By assumption \cref{eq:assumptionKappaAMainThmYhat} we immediately see that $\varepsilon_1<1/3$, defined in~\cref{lem:condsRandARinv}, and hence $\hat{R}$ and $A\hat{R}^{-1}$ are full rank. It follows that $\kappa_2(A\hat{R}^{-1})\leq 2\kappa_2(SQ_A)$. Furthermore, also by~\cref{lem:condsRandARinv}, we find 
\begin{equation}\label{eq:boundkappahatRB}
    \kappa_2(\hat{R})\leq \frac{\kappa_2(R) + \varepsilon_1}{1-\varepsilon_1}\leq\frac{1}{2}(3\kappa_2(SQ_A)\kappa_2(A) + 1) < \frac{1}{2}\left(\frac{1}{C_1} + 1\right).
\end{equation}
By using the fact that $m>s>n$, we can show that $C_1 > (m + n^{3/2})\gamma_n,$
which can be substituted into~\cref{eq:boundkappahatRB}. One can use the bound on $\kappa_2(\hat{R})$ to then bound $\phi(\hat{R})$, defined in~\cref{eq:definitionphiRB}, as
$$\phi(\hat{R}) = \frac{n\gamma_n\kappa_2(\hat{R})}{1 - \sqrt{n}\gamma_n\kappa_2(\hat{R})} < \frac{\frac{1}{2}n\left(\frac{1}{m + n^{3/2}} + \gamma_n \right)}{1-\frac{1}{2}\sqrt{n}\left(\frac{1}{m + n^{3/2}} + \gamma_n \right)} = \frac{\sqrt{n}c(m,n)}{1-c(m,n)},$$
where $$c(m,n) = \frac{1}{2}\sqrt{n}\left(\frac{1}{m+n^{3/2}} + \gamma_n\right).$$
The assumptions in~\cref{eq:assumptionsMainThmYhat} imply that 
$c(m,n) < (4\sqrt{n}\kappa_2(SQ_A) + 1)^{-1}$, where we used that $n^2u<1/201$, i.e. $n\gamma_n < 1/200$.
Now we have that
$\phi(\hat{R}) < (4\kappa_2(SQ_A))^{-1}$ and so 
$$\varepsilon_2 \leq \phi(\hat{R})\kappa_2(A\hat{R}^{-1}) \leq \kappa_2(SQ_A)\phi(\hat{R})\frac{1+\varepsilon_1}{1-\varepsilon_1}< \frac{1}{2}.$$
By~\cref{lem:condYhatinitial}, $\hat{Y}$ is full rank and its condition number is bounded by
$$\kappa_2(\hat{Y})\leq\frac{\kappa_2(A\hat{R}^{-1}) + \varepsilon_2}{1-\varepsilon_2} < 4\kappa_2(SQ_A) + 1.$$
\end{proof}
It is worth discussing the assumptions of~\cref{thm:mainTheoremYhat}. Consider the assumption in~\cref{eq:assumptionKappaAMainThmYhat}. The $k(S)$ term (see~\cref{eq:defconstantsC1kX}) is a small constant depending on the particulars of the embedding matrix. The constant $C_1$ (see~\cref{eq:defconstantsC1kX}) can be bounded by the product of a low-degree polynomial in $m$, $n$, and $s$ and the unit-round-off $u$, say $C_1\leq p(m,n,s)u$. We specifically have $C_1 = \mathcal{O}(\sqrt{sn}(m + \sqrt{s}n)u)$, as $\gamma_m = \mathcal{O}(mu)$ and $\tilde{\gamma}_{sn} = \mathcal{O}(snu)$. In classical stability analysis, this polynomial term $p(m,n,s)$ can be more or less ignored. As a result, a condition of the form $\kappa_2(A)<(p(m,n,s)u)^{-1}$ is often loosely restated as $\kappa_2(A) \ll u^{-1}$. 

The rationale behind ignoring these $m$, $n$, and $s$ terms is that classic stability analysis, as we performed in this section, provides us with worst-case bounds on numerical errors. These are generally pessimistic~\cite{Higham2002}; composing $n$ operations would only result in an error of order $nu$ if each rounding error is of the same sign and of maximum magnitude~\cite{higham2019new}. We could improve these bounds by using probabilistic backward error analysis, which would give us results proportional to $\sqrt{n}u$ instead of $nu$~\cite{higham2019new,higham2020sharper}. However, it must be noted that even probabilistic stability analysis would theoretically result in a large factor multiplied by $u$. 

The other assumptions of~\cref{thm:mainTheoremYhat}, in~\cref{eq:assumptionsMainThmYhat}, are reasonable in practice. The condition number of $SQ_A$ is generally observed to be bounded by a modest number, such as $5$ or $10$, with high probability. Furthermore, we assume that the problem dimensions are small compared to the unit round-off. Most importantly, ~\cref{thm:mainTheoremYhat} informs us that, provided that $\kappa_2(A)\ll u^{-1}$, the condition number of the preconditioned matrix is independent of $\kappa_2(A)$ (with high probability).

\subsection{The backward error of sketch-and-apply Blendenpik}\label{sec:theBWerrofSaA}
Until now, we have only considered the condition number of the preconditioned matrix $AP$. The analysis in this section related rounding errors incurred in the final steps --- solving $(AR^{-1})z = b$ with LSQR and computing $x = R^{-1}z$ with back substitution --- to the backward error of sketch-and-apply. It is not immediate that the last step in the algorithm preserves the numerical stability of the algorithm. After all, solving linear systems with $R$ in LSQR iterations is one potential reason for the numerical instability of sketch-and-precondition Blendenpik. Nonetheless, we show that sketch-and-apply Blendenpik performs as well as LSQR on very well-conditioned matrices and that the numerical error in the last step is $\mathcal{O}(u)$ instead of $\mathcal{O}(u\kappa_2(A))$ (see~\cref{thm:mainTheoremLSQR}).

Throughout this analysis, we will not consider a sketch-and-solve initialization and consider stability properties from any starting point.

We show that the backward error in sketch-and-apply Blendenpik is of the same order as the backward error from unpreconditioned LSQR with $\text{fl}(AR^{-1})$, under the assumptions of~\cref{thm:mainTheoremYhat}. 
\begin{theorem}\label{thm:mainTheoremLSQR}
Assume the embedding matrix $S\in\mathbb{R}^{s\times m}$ and $A\in\mathbb{R}^{m\times n}$ satisfy assumptions~\cref{eq:assumptionsMainThmYhat} and~\cref{eq:assumptionKappaAMainThmYhat} in~\cref{thm:mainTheoremYhat}. Apply~\cref{alg:ARBlendenpik} and assume that LSQR terminates at an iterate $\hat{z}$ that satisfies the backward error\footnote{Note that the backward error term $\Delta \hat{Y}$ is not the same as the numerical error $\Delta Y$ resulting from solving the triangular system $Y = A\hat{R}^{-1}$ in finite precision.}
\begin{equation}  \label{eq:berrCGwellcond}
\hat{z} = (\hat{Y} + \Delta   \hat{Y})^{\dagger}(b + \delta b).  
\end{equation}
Then, the computed solution $\hat{x}$ to $Ax = b$ has the backward error $(A + \Delta A)\hat{x} = b + \delta b$
satisfying
\[
    \|\Delta A\|_2 < \|S\|_2\|A\|_2\left(6.04n\gamma_n\|(SQ_A)^{\dagger}\|_2 + 2.01\|\Delta \hat{Y}\|_2 \right).
\]
\end{theorem}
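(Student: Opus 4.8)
The plan is to track a single LSQR iterate $\hat z$ through the final two operations of Algorithm~\ref{alg:ARBlendenpik} --- the back-substitution $\hat x = \hat R^{-1}\hat z$ and the implicit relationship $\hat Y = A\hat R^{-1} + \Delta Y$ --- and to push the assumed backward error \cref{eq:berrCGwellcond} on the well-conditioned matrix $\hat Y$ back onto $A$. The right-hand-side perturbation $\delta b$ should simply carry through unchanged (this is why the theorem states the same $\delta b$ on both sides), so the entire task is to account for the perturbation of the coefficient matrix. First I would write $\hat z = (\hat Y + \Delta\hat Y)^\dagger(b+\delta b)$, i.e. $\hat z$ is the exact LS solution of $(\hat Y + \Delta\hat Y)z = b + \delta b$, and then substitute $\hat Y = A\hat R^{-1} + \Delta Y$ to get $\hat z$ as the LS solution of $(A\hat R^{-1} + \Delta Y + \Delta\hat Y)z = b+\delta b$. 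Setting $\hat x = \hat R^{-1}\hat z$ exactly (in exact arithmetic) would give $(A + (\Delta Y + \Delta\hat Y)\hat R)\,\hat x = b + \delta b$ in the LS sense; but the back-substitution is itself performed in finite precision, so by \cref{eq:backerrbacksub} I get a further perturbation $(\hat R + \Delta\hat R)\hat x = \hat z$ with $\|\Delta\hat R\|_2 \le \sqrt n\,\gamma_n\|\hat R\|_2$. Combining these yields $(A + \Delta A)\hat x = b + \delta b$ in the LS sense with $\Delta A = (\Delta Y + \Delta\hat Y)(\hat R + \Delta\hat R) + A\hat R^{-1}\Delta\hat R \cdot(\text{something})$ --- the exact bookkeeping needs care, but morally $\Delta A \approx (\Delta Y)\hat R + (\Delta\hat Y)\hat R + A\hat R^{-1}\Delta\hat R$.

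The second step is to bound each of these three pieces in the spectral norm. For the $A\hat R^{-1}\Delta\hat R$ term I would use $\|A\hat R^{-1}\|_2 = \|\hat Y - \Delta Y\|_2 \le \kappa_2(A\hat R^{-1})\sigma_{\min}(A\hat R^{-1}) + \|\Delta Y\|_2$ and, more simply, $\|A\hat R^{-1}\|_2 \le (1-\varepsilon_1)^{-1}\|AR^{-1}\|_2$ from the chain of inequalities inside the proof of \cref{lem:condsRandARinv}, together with $\|\Delta\hat R\|_2 \le \sqrt n\gamma_n\|\hat R\|_2$ and $\|\hat R\|_2 \le \|R\|_2 + \|E\|_2$ from \cref{eq:boundhatRB1}; here $\|R\|_2 = \|SA\|_2 \le \|S\|_2\|A\|_2$ (since $R$ is the triangular factor of $QR = SA$, $\|R\|_2 = \|SA\|_2$). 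For the $(\Delta Y)\hat R$ term I use $\|\Delta Y\|_2 \le \phi(\hat R)\|A\hat R^{-1}\|_2 \le \phi(\hat R)\cdot 2\kappa_2(SQ_A)\sigma_{\min}(A R^{-1})(1-\varepsilon_1)^{-1}$ — actually cleaner is $\|\Delta Y\|_2\|\hat R\|_2 \le \phi(\hat R)\|A\hat R^{-1}\|_2\|\hat R\|_2$ and then bound $\|A\hat R^{-1}\|_2\|\hat R\|_2 \lesssim \|A\|_2\cdot\kappa_2(\hat R)\cdot(\text{const})$ --- but the target bound only has a factor $n\gamma_n\|(SQ_A)^\dagger\|_2$, so I expect the right grouping is $\|\Delta Y\|_2\|\hat R\|_2 \lesssim n\gamma_n \|S\|_2\|A\|_2\|(SQ_A)^\dagger\|_2$, obtained by combining $\phi(\hat R) = O(n\gamma_n\kappa_2(\hat R))$ with $\|A\hat R^{-1}\|_2 = O(\|(SQ_A)^\dagger\|_2\cdot\sigma_{\min}(A)^{-1}\cdot\|A\|_2\cdot\ldots)$ — this is the calculation that has to be done carefully to land the constant $6.04$. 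For the $(\Delta\hat Y)\hat R$ term I use $\|\Delta\hat Y\|_2\|\hat R\|_2$; since $\hat Y$ has condition number $\le 4\kappa_2(SQ_A)+1 = O(1)$ by \cref{thm:mainTheoremYhat}, and $\|\hat Y\|_2 \le (1+\varepsilon_1)(1-\varepsilon_1)^{-1}\|AR^{-1}\|_2 \le \text{const}\cdot\|S\|_2\|A\|_2\|(SQ_A)^\dagger\|_2/\ldots$, I get $\|\Delta\hat Y\|_2\|\hat R\|_2 \lesssim \|\Delta\hat Y\|_2\|S\|_2\|A\|_2$, explaining the $2.01\|\Delta\hat Y\|_2$ term in the statement.

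The main obstacle is the careful propagation of all the small perturbations through the back-substitution and the factor $\|A\hat R^{-1}\|_2\|\hat R\|_2$: naively this product is $\|A\|_2\kappa_2(\hat R)$, which is \emph{large} (it contains $\kappa_2(A)$), so the theorem cannot be obtained by triangle-inequality-and-submultiplicativity alone — one must keep the structure $A\hat R^{-1}\cdot\Delta\hat R$ where $\Delta\hat R$ is the \emph{back-substitution} error, not peel $\hat R^{-1}$ and $\hat R$ apart and bound them separately. The key identity to exploit is that $\Delta\hat R$ from \cref{eq:backerrbacksub} is \emph{relatively} small, $\|\Delta\hat R\|_2 \le \sqrt n\gamma_n\|\hat R\|_2$, so $\|A\hat R^{-1}\Delta\hat R\|_2 \le \sqrt n\gamma_n\|A\hat R^{-1}\|_2\|\hat R\|_2 \le \sqrt n\gamma_n\|A\|_2\kappa_2(\hat R)\cdot(\ldots)$ — hmm, this still has $\kappa_2(\hat R)$. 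The resolution must be that in fact $\|A\hat R^{-1}\Delta\hat R\|_2$ should be bounded via $\|A\hat R^{-1}\Delta\hat R\hat x\|_2$ at the level of the \emph{solution} $\hat x$, or that $\Delta\hat R$ acts column-wise and $A\hat R^{-1}\Delta\hat R$ telescopes back to something like $A$ times a relative perturbation — i.e. one writes $A\hat R^{-1}\Delta\hat R = A(\hat R^{-1}\Delta\hat R)$ and notes $\|\hat R^{-1}\Delta\hat R\|_2 \le \sqrt n\gamma_n\kappa_2(\hat R)$, which is \emph{still} large, so this cannot be it either; the genuinely correct route is to observe that the back-substitution error satisfies the componentwise bound $|\Delta\hat R| \le \gamma_n|\hat R|$ (from \cite[Thm.~8.5]{Higham2002}), so $|A\hat R^{-1}\Delta\hat R| \le \gamma_n|A\hat R^{-1}||\hat R| = \gamma_n|A||\hat R^{-1}||\hat R|$ up to the sign structure, and $\||A\hat R^{-1}||\hat R|\|$ is \emph{not} $\|A\|\kappa_2(\hat R)$ because $|\hat R^{-1}||\hat R|$ for triangular $\hat R$ can be bounded more favourably — this componentwise argument is exactly the subtlety that separates a meaningful backward bound from a vacuous one, and I expect it to be where essentially all the work of the proof lies. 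Once that component-wise bound is in hand, collecting the three contributions and absorbing constants using $n^2 u < 1/201$ and $\kappa_2(SQ_A) < 49$ gives the stated inequality.
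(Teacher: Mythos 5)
Your skeleton is the paper's: propagate \cref{eq:berrCGwellcond} through the finite-precision back substitution $(\hat{R}+\Delta\hat{R})\hat{x}=\hat{z}$ via \cref{eq:backerrbacksub}, so that $(A+\Delta A)=(\hat{Y}+\Delta\hat{Y})(\hat{R}+\Delta\hat{R})$ up to the discrepancy between $\hat{Y}\hat{R}$ and $A$, and then bound the resulting terms. But there is a genuine gap in how you handle that discrepancy. You write $\hat{Y}\hat{R}-A=\Delta Y\hat{R}$ and bound $\|\Delta Y\|_2\le\phi(\hat{R})\|A\hat{R}^{-1}\|_2$ using the \emph{forward} error of the triangular solve, \cref{eq:trisubserr}. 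Since $\phi(\hat{R})=\mathcal{O}(n\gamma_n\kappa_2(\hat{R}))$ and, under \cref{eq:assumptionKappaAMainThmYhat}, $\kappa_2(\hat{R})$ may be as large as roughly $1/C_1=\mathcal{O}(1/(mu))$, this route only yields $\|\Delta Y\|_2\|\hat{R}\|_2=\mathcal{O}(1)\cdot\|S\|_2\|A\|_2\|(SQ_A)^{\dagger}\|_2$ --- an $\mathcal{O}(1)$ relative perturbation, not the $\mathcal{O}(n\gamma_n)$ one in the statement. You flag this as ``the calculation that has to be done carefully,'' but no amount of care closes a gap of size $\kappa_2(\hat{R})$. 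The paper instead uses the row-wise \emph{backward} error of the forward substitution that produced $\hat{Y}$: each computed row satisfies $\hat{y}_i^T(\hat{R}+\Delta_i\hat{R})=a_i^T$ with $|\Delta_i\hat{R}|\le\gamma_n|\hat{R}|$, hence $\hat{y}_i^T\hat{R}-a_i^T=-\hat{y}_i^T\Delta_i\hat{R}$ and $\|\hat{Y}\hat{R}-A\|_2\le n\gamma_n\|\hat{R}\|_2\|\hat{Y}\|_2$, with no $\kappa_2(\hat{R})$ appearing at all. This substitution of backward for forward error in the triangular solve is the one idea your proposal is missing.

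Conversely, the term you single out as the main obstacle, $A\hat{R}^{-1}\Delta\hat{R}$ (equivalently $\hat{Y}\Delta\hat{R}$), is in fact unproblematic, and your proposed componentwise $|\hat{R}^{-1}||\hat{R}|$ rescue is a red herring. The estimate $\|A\hat{R}^{-1}\|_2\|\hat{R}\|_2\approx\|A\|_2\kappa_2(\hat{R})$ is wrong here: by \cref{lem:condsRandARinv}, $\|A\hat{R}^{-1}\|_2\le(1+\phi(\hat{R}))(1-\varepsilon_1)^{-1}\|(SQ_A)^{\dagger}\|_2<1.51\|(SQ_A)^{\dagger}\|_2$ (this is precisely the point of the preconditioner), and $\|\hat{R}\|_2\le(1+C_1)\|S\|_2\|A\|_2<2\|S\|_2\|A\|_2$, so $\|\hat{Y}\Delta\hat{R}\|_2\le\sqrt{n}\gamma_n\|\hat{Y}\|_2\|\hat{R}\|_2$ is already of the right size by plain submultiplicativity. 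Collecting $\|\Delta A\|_2\le\|\hat{R}\|_2\bigl((n+\sqrt{n})\gamma_n\|\hat{Y}\|_2+(1+\sqrt{n}\gamma_n)\|\Delta\hat{Y}\|_2\bigr)$ and inserting these two norm bounds together with $n\gamma_n\le 1/200$ from \cref{eq:assumptionsMainThmYhat} gives the stated constants.
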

\begin{proof}
The final step of~\cref{alg:ARBlendenpik} involves solving the triangular system $\hat{R}x = \hat{z}$. We know from~\cref{eq:backerrbacksub} that the computed solution $\hat{x}$ satisfies
$$(\hat{R} + \Delta \hat{R})\hat{x} = \hat{z}, \quad \|\Delta \hat{R}\|_2 \leq \gamma_n\sqrt{n} \|\hat{R}\|_2.$$
We now have
\begin{equation*}
    \hat{x} = (\hat{R} + \Delta \hat{R})^{-1}\hat{z}
     = (\hat{R} + \Delta \hat{R})^{-1}(\hat{Y} + \Delta \hat{Y})^{\dagger}(b + \delta b)
     = (A + \Delta A)^{\dagger}(b+ \delta b),
\end{equation*}
where $\Delta A = (\hat{Y}\hat{R}-A) +  \Delta \hat{Y}\hat{R} +\hat{Y}\Delta \hat{R} + \Delta \hat{Y}\Delta \hat{R}.$
For the $\hat{Y}\hat{R} -A$ term  we have that each row satisfies
$$ \hat{y}_i^T\hat{R} - a_i^T = -\hat{y}_i^T\Delta_i\hat{R}, \qquad |\Delta_i \hat{R}|\leq \gamma_n|\hat{R}|,$$
so that $\|\hat{Y}\hat{R}-A\|_2 \leq n\gamma_n\|\hat{R}\|_2\|\hat{Y}\|_2$.
It follows that
\begin{equation*}
    \|\Delta A\|_2 \leq \|\hat{R}\|_2\left( (n+\sqrt{n})\gamma_n\|\hat{Y}\|_2 + (1 + \sqrt{n}\gamma_n)\|\Delta \hat{Y}\|_2\right),
\end{equation*}
where $\|\hat{R}\|_2\leq (1+C_1)\|S\|_2\|A\|_2 < 2\|S\|_2\|A\|_2,$
and
\begin{equation*}
    \|\hat{Y}\|_2 
    \leq\|A\hat{R}^{-1}\|_2 + \|\Delta Y\|_2 \leq \left(1 + \phi(\hat{R})\right)\frac{\|(SQ_A)^{\dagger}\|_2}{1-\varepsilon_1}<1.51\|(SQ_A)^{\dagger}\|_2,
\end{equation*}
where the last inequality follows from the assumptions~\cref{eq:assumptionsMainThmYhat} and~\cref{eq:assumptionKappaAMainThmYhat}. We obtain
$$\|\Delta A\| \leq 2\|S\|_2\|A\|_2\left(3.02n\gamma_n\|(SQ_A)^{\dagger}\|_2 + 1.005\|\Delta \hat{Y}\|_2\right),$$
where we used that $n+\sqrt{n}\leq 2n$ and $\sqrt{n}\gamma_n\leq n\gamma_n\leq 1/200$ by~\cref{eq:assumptionsMainThmYhat}.
\end{proof}
The moral of~\cref{thm:mainTheoremLSQR} is that sketch-and-apply Blendenpik is backward stable as long as LSQR on $\hat{Y}$ is backward stable, that is, $\|\Delta   \hat{Y}\|_2$ and $\|\delta b\|_2/\|b\|_2$ are both $\mathcal{O}(u)$ in~\cref{eq:berrCGwellcond}. 
Under the conditions of~\cref{thm:mainTheoremLSQR}, the matrix $\hat{Y}$ is well-conditioned with high probability. 
The backward stability of LSQR on a well-conditioned matrix is not an undisputedly clear fact. Practical evidence is abundant and several studies~\cite{bjorck1998stability, greenbaum1989behavior,greenbaum1997estimating,greenbaum1992predicting,meurant2006lanczos} have addressed the convergence of the conjugate gradient (CG) method (LSQR is a stable implementation of CG applied to the normal equation), we are unaware of a precise result that proves CG (or LSQR) applied to a well-conditioned system is backward stable. Such stability analysis would depend on the precise implementation of the algorithm (see~\cref{sec:discussion}).

Remarkably, the final step of the algorithm does not influence the accuracy of the solution, i.e., solving $Rx = y$ with back substitution. 
This is despite the fact that $R$ is ill-conditioned when $A$ is.
The mechanism with which stability is established is similar to the backward stability of an algorithm based on repeated CholeskyQR~\cite{Yamamoto2015}.

\section{On the numerical stability of smoothed sketch-and-apply Blendenpik}\label{sec:SmoothedAnalysisMain}
We now analyze smoothed sketch-and-apply Blendenpik (see~\cref{alg:smoothedARBlendenpikshort}). When $\kappa_2(A) \ll 1/u$,~\cref{thm:mainTheoremYhat} states that sketch-and-apply computes a preconditioned matrix with a condition number independent of $\kappa_2(A)$ with high probability. Moreover, we can reliably estimate $\kappa_2(A)$ by $\kappa_2(SA)$. We do not have a guarantee on the stability of sketch-and-apply Blendenpik if $\kappa_2(A)\gtrsim 1/u$. It is in this context that we consider smoothing.

One could consider~\cref{alg:smoothedARBlendenpikshort} from two points of view. Firstly, a practitioner may
add an $\mathcal{O}(u\|A\|_2)$ Gaussian perturbation to a severely ill-conditioned problem. As a result, the perturbed problem can be accurately solved with sketch-and-apply. Secondly, floating point errors arising from the representation of any matrix in finite precision could be modeled as a Gaussian perturbation and then smoothing partially explains the success of sketch-and-apply to highly ill-conditioned LS problems.

\subsection{Bounding $\mathbf{\boldsymbol\kappa_2(\hat{Y})}$}\label{sec:boundyhatsmoothed}
Similar to the stability analysis in~\cref{sec:condition}, we have the following steps in finite precision after we have smoothed $A$:
\begin{enumerate}
    \item $\tilde{A} = A + \sigma G/\sqrt{m}$,
    \item $\hat{B}=S\tilde{A} + E_1$,
    \item $\tilde{Q}\hat{R} = \hat{B} + E_2$,
    \item $\hat{Y} = \tilde{A}\hat{R}^{-1} + \Delta Y$,
\end{enumerate}
where the hats denote that we are accounting for rounding errors and the scaling parameter $\sigma$ controls how much $A$ is perturbed. Assuming $\sigma$ is sufficiently large to ensure that $\tilde{A}$ satisfies~\cref{eq:assumptionKappaAMainThmYhat}, we then show that the stability analysis from~\cref{sec:condition} carries over to $\tilde{A}$.

The following theorem tells us how large $\sigma$ needs to be.
\begin{theorem}\label{thm:smoothedfiniteprec} 
Let $A\in\mathbb{R}^{m\times n}$ with $m\geq 3n$, $S\in\mathbb{R}^{s\times n}$ have linearly independent rows, and let $m>s>n$. Set 
\begin{equation}\label{eq:magnitudesigma}
    \tilde{A} = A +\sigma G/\sqrt{m}, \qquad \sigma = 52\|A\|_2k(S)sm\sqrt{n}u,
\end{equation}
where $G\in\mathbb{R}^{m\times n}$ is a standard Gaussian matrix with i.i.d.~entries, $k(S)$ is defined in~\cref{eq:defconstantsC1kX} and $u$ is the unit round-off.
Let $\tilde{A} = Q_{\tilde{A}}R_{\tilde{A}}$ and $S\tilde{A} = QR$ be economized QR factorizations of $\tilde{A}$ and $B = S\tilde{A}$, respectively. Lastly, assume that\footnote{We also assume that $cn<m$ for the same integer $c$ defined in $\tilde{\gamma}_{sn}$ to simplify the notation.} 
\begin{equation}\label{eq:assumptionsSmoothedThmYhat}
        \kappa_2(SQ_{\tilde{A}}) <49,\qquad  \frac{m}{n}+\sqrt{n} > 200,  \qquad smu<1/67.
    \end{equation}
Now compute first the product $S\tilde{A}$, then the upper triangular factor $R$ by the Householder QR algorithm on $S\tilde{A}$, and finally compute $\hat{Y} = \tilde{A}R^{-1}$ with forward substitution. Assume all computations are performed in finite precision. Then,
\[
    \mathbb{P}\left(\kappa_2(\hat{Y}) > 4\kappa_2(SQ_{\tilde{A}}) + 1\right) < 2^{n-m}.
\]
\end{theorem}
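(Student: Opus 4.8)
The plan is to reduce Theorem~\ref{thm:smoothedfiniteprec} to Theorem~\ref{thm:mainTheoremYhat} by showing that, with probability at least $1 - 2^{n-m}$, the smoothed matrix $\tilde{A} = A + \sigma G/\sqrt{m}$ satisfies the hypothesis~\cref{eq:assumptionKappaAMainThmYhat}, i.e., $\kappa_2(\tilde{A}) < 1/(3 C_1 k(S))$. Once this event holds, the entire finite-precision analysis of~\cref{sec:condition} applies verbatim with $A$ replaced by $\tilde{A}$ (the four finite-precision steps listed at the start of~\cref{sec:boundyhatsmoothed} are exactly the steps of~\cref{thm:mainTheoremYhat} applied to $\tilde{A}$), and the conclusion $\kappa_2(\hat{Y}) \leq 4\kappa_2(SQ_{\tilde{A}}) + 1$ follows deterministically on that event. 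So the real content is a probabilistic condition-number bound for $\tilde{A}$.

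First I would invoke~\cref{cor:smoothedKappa} with the perturbation $\tilde{A} = A + \sigma G/\sqrt{m}$. That corollary says that if $\sigma = 8.25\|A\|_2/r$ for some target $r \geq 10^4$ and $m \geq 3n$, then $\mathbb{P}[\kappa_2(\tilde{A}) \geq r] < 2^{n-m}$. Here $\sigma$ is prescribed by~\cref{eq:magnitudesigma} as $\sigma = 52\|A\|_2 k(S) s m \sqrt{n}\, u$, so the implied target is
\begin{equation*}
    r = \frac{8.25\|A\|_2}{\sigma} = \frac{8.25}{52\, k(S)\, s m \sqrt{n}\, u}.
\end{equation*}
Next I would verify the two things needed to apply~\cref{cor:smoothedKappa} and to close the argument: (i) $r \geq 10^4$, and (ii) $r < 1/(3 C_1 k(S))$, so that $\kappa_2(\tilde{A}) < r$ implies~\cref{eq:assumptionKappaAMainThmYhat}. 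For (i), using $smu < 1/67$ from~\cref{eq:assumptionsSmoothedThmYhat} together with $k(S) \geq 1$ (since $\|S\|_2\|(SQ_A)^\dagger\|_2 \geq \|SQ_A\|_2\|(SQ_A)^\dagger\|_2 \geq 1$) and $\sqrt{n} \geq 1$ gives a lower bound on $r$ of the right order; one then checks the numerical constant $52/8.25$ against $67$ and whatever bound on $n$ is in force, which is exactly what the $smu < 1/67$ threshold is calibrated for. For (ii), recall from~\cref{thm:mainTheoremYhat} that $C_1 = \mathcal{O}((\sqrt{sn}\,m + sn^{3/2})u)$; more precisely from~\cref{eq:defconstantsC1kX}, $C_1 = \sqrt{sn}\gamma_m + \sqrt{n}\tilde{\gamma}_{sn}(1+\sqrt{sn}\gamma_m)$, and under $smu<1/67$ one can bound $\gamma_m \leq m u/(1-mu)$ and $\tilde{\gamma}_{sn} \leq c\, snu/(1-c\,snu)$ crudely by a constant multiple of $mu$ and $snu$ respectively, yielding $C_1 \leq c'\, sm\sqrt{n}\, u$ for an explicit modest constant $c'$. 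Then $3 C_1 k(S) \leq 3c'\, k(S)\, sm\sqrt{n}\, u$, and one needs $8.25/(52\, k(S)\, sm\sqrt{n}\, u) \geq 1/(3c'\, k(S)\, sm\sqrt{n}\, u)$, i.e., $3c' \cdot 8.25 \geq 52$, which is a fixed inequality between absolute constants; the coefficient $52$ in~\cref{eq:magnitudesigma} is presumably chosen precisely so this holds with room to spare.

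The main obstacle is bookkeeping the constants: one must track $\tilde{\gamma}_{sn}$'s hidden integer constant $c$, the $1/(1-\cdot)$ denominators in $\gamma_m, \tilde\gamma_{sn}, \gamma_n$, and the fact that~\cref{cor:smoothedKappa} as stated requires $r \geq 10^4$ and $m \geq 3n$ (hence the explicit $m \geq 3n$ hypothesis here) — these must all be shown to follow from the three conditions in~\cref{eq:assumptionsSmoothedThmYhat}. I would also double-check that $SQ_{\tilde{A}}$, not $SQ_A$, is the object appearing in $k(S)$ versus the conclusion: the definition of $k(S)$ in~\cref{eq:defconstantsC1kX} uses $Q_A$, but in the smoothed setting the relevant sketching quality is that of $\tilde{A}$; since $\tilde{A}$ is essentially a fresh random matrix, one either treats $k(S)$ as referring to $\tilde A$ throughout (a harmless notational identification, as the $\pm 1$ of~\cref{alg:ARBlendenpik} is applied to $\tilde A$) or absorbs the discrepancy into the constant. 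Finally, I would state explicitly that on the complementary event of probability $< 2^{n-m}$ the bound may fail, which is exactly the form of the conclusion. Modulo the constant-chasing, the proof is a one-line reduction: \emph{smoothing makes $\tilde A$ well enough conditioned that~\cref{thm:mainTheoremYhat} applies, with the failure probability inherited from~\cref{cor:smoothedKappa}.}
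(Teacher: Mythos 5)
Your proposal follows the paper's proof essentially step for step: bound $C_1$ by a constant multiple of $sm\sqrt{n}\,u$ using the hypotheses in \cref{eq:assumptionsSmoothedThmYhat} (the paper gets $C_1 < 2.1\,sm\sqrt{n}\,u$ via $\gamma_m<1.02mu$ and $\tilde\gamma_{sn}<1.02smu$), invoke \cref{cor:smoothedKappa} with $r = 8.25/(52\,k(S)sm\sqrt{n}\,u)$ to conclude $\mathbb{P}[\kappa_2(\tilde A) > 1/(3C_1k(S))] < 2^{n-m}$, and then apply \cref{thm:mainTheoremYhat} to $\tilde A$ on the complementary event. One slip in your constant check: having correctly stated that you need $r \le 1/(3C_1 k(S))$, you then write the sufficient condition backwards as $8.25/(52\,k(S)sm\sqrt{n}\,u) \ge 1/(3c'k(S)sm\sqrt{n}\,u)$, i.e.\ $24.75\,c' \ge 52$; the correct requirement is $24.75\,c' \le 52$, i.e.\ $c' \le 2.1$, which the paper's bound $C_1 < 2.1\,sm\sqrt{n}\,u$ satisfies with essentially no slack ($24.75\times 2.1 = 51.975$), not ``with room to spare.'' A minor further point: the hypothesis $r\ge 10^4$ of \cref{cor:smoothedKappa}, which you rightly flag, does not in fact follow from $smu<1/67$ alone since $k(S)$ and $\sqrt{n}$ sit in the denominator of $r$; the paper's proof silently omits this check as well, so you are being more careful than the source here.
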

\begin{proof}
We start by showing that $\sigma$ in~\cref{eq:magnitudesigma} is sufficiently large to ensure that $\tilde{A}$ satisfies~\cref{eq:assumptionKappaAMainThmYhat} in~\cref{thm:mainTheoremYhat}. To this end, note the assumptions in~\cref{eq:assumptionsSmoothedThmYhat} imply that $mu<1/64$ and $csnu<1/64$ so that we have $\gamma_m<1.02mu$ and $\tilde{\gamma}_{sn}<1.02csnu<1.02smu$. We can then bound $C_1$ defined in~\cref{eq:defconstantsC1kX} as
$$C_1 < 1.02\times65sm(1 + \sqrt{n})u/64 < 2.1 sm\sqrt{n}u.$$
As a result
\[
    \mathbb{P}\left(\kappa_2(\tilde{A})>\frac{1}{3C_1k(S)}  \right) < \mathbb{P}\left(\kappa_2(\tilde{A}) > \frac{8.25}{52k(S)sm\sqrt{n}u} \right) < 2^{n-m}.
\]
The final result on $\kappa_2(\hat{Y})$ follows by applying~\cref{thm:mainTheoremYhat} to $\tilde{A}$, which is possible since the last assumption in~\cref{eq:assumptionsSmoothedThmYhat} is stronger than $nu<1/201$.
\end{proof}

\cref{thm:smoothedfiniteprec} shows that the magnitude of the perturbation $\sigma G/\sqrt{m}$ needs to be approximately $\mathcal{O}(sm\sqrt{n}u)$ to ensure the condition number of $\tilde{A}$ is sufficiently small, that is, $\kappa_2(A)<1/u$. The low-degree polynomial term $sm\sqrt{n}$ arises from~\cref{eq:assumptionKappaAMainThmYhat}, which includes the constant $C_1 = \mathcal{O}(\sqrt{sn}(m + \sqrt{s}n)u)$. As discussed in~\cref{sec:theBWerrofSaA}, these are often pessimistic bounds. Probabilistic error analysis provides a theoretical justification to select $\sigma$ proportional to $\sqrt{sm}n^{1/4}u$. This may still be a relatively large perturbation if the dimensions of the problem are enormous, enlarging the backward error to an unacceptable level (see~\cref{sec:smoothedanalysisLSQR}).

\subsection{The backward error of smoothed sketch-and-apply Blendenpik} \label{sec:smoothedanalysisLSQR}
In~\cref{sec:boundyhatsmoothed}, we showed that a small additive random perturbation to a small-skinny matrix ensures that the condition number of the perturbed matrix is sufficiently small. This allows us to conclude that $\kappa_2(\hat{Y})$ is small with high probability.  We now show that smoothed sketch-and-apply Blendenpik (see~\cref{alg:smoothedARBlendenpikshort}) computes a backward error similar to~\cref{thm:mainTheoremLSQR} with an additional term from the additive perturbation.

\begin{theorem}\label{thm:smoothedLSQR}
    Assume the same setup as in~\cref{thm:smoothedfiniteprec}. Apply~\cref{alg:ARBlendenpik} to $\tilde{A}x=b$ and assume LSQR terminates at an iterate $\hat{z}$ that satisfies the backward error
    \begin{equation*}
        \hat{z} = (\hat{Y} + \Delta \hat{Y})^{\dagger}(b+\delta b).
    \end{equation*}
    Then, the computed solution $\hat{x}$ to $Ax=b$ has the backward error $(A + \Delta A)\hat{x} = b + \delta b$ satisfying
    \begin{multline}
        \|\Delta A\|_2 \leq \|A\|_2\left[\vphantom{\frac12}\|S\|_2\left(1 + 197k(S)sm\sqrt{n}u\right)\right.\\ 
        \left. \left(6.04n\gamma_n\|(SQ_{\tilde{A}})^{\dagger}\|_2 + 2.01\|\Delta \hat{Y}\|_2 \right) + 197k(S)sm\sqrt{n}u\vphantom{\frac12}\right]
    \end{multline}
    with probability at least $1-2^{n-m}-10^{-3}$.
\end{theorem}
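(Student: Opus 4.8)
The plan is to reduce the statement to Theorem~3.5 (the unsmoothed backward-error bound) applied to the perturbed matrix $\tilde A = A + \sigma G/\sqrt m$, and then to account for the single extra perturbation $\sigma G/\sqrt m$ in passing from a backward result for $\tilde A x = b$ to one for $A x = b$. First I would invoke Theorem~3.6: under the assumptions of Theorem~3.7 (which are exactly those of Theorem~3.6), with probability at least $1 - 2^{n-m}$ the perturbed matrix $\tilde A$ satisfies the hypothesis~\cref{eq:assumptionKappaAMainThmYhat}, and moreover $\kappa_2(\hat Y) \le 4\kappa_2(SQ_{\tilde A}) + 1$. On that event, Theorem~3.5 applies verbatim to $\tilde A$: the computed solution $\hat x$ of $\tilde A x = b$ obtained by \cref{alg:ARBlendenpik} satisfies $(\tilde A + \Delta\tilde A)\hat x = b + \delta b$ with
\[
\|\Delta \tilde A\|_2 < \|S\|_2\|\tilde A\|_2\bigl(6.04\, n\gamma_n\|(SQ_{\tilde A})^\dagger\|_2 + 2.01\|\Delta\hat Y\|_2\bigr).
\]

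Next I would rewrite this as a backward result for the \emph{original} matrix $A$. Since $A = \tilde A - \sigma G/\sqrt m$, we have $(A + \Delta A)\hat x = b + \delta b$ with $\Delta A = \Delta\tilde A + \sigma G/\sqrt m$, so $\|\Delta A\|_2 \le \|\Delta\tilde A\|_2 + (\sigma/\sqrt m)\|G\|_2$. Two estimates remain. For the first term, I would replace $\|\tilde A\|_2$ by $\|A\|_2(1 + \sigma\|G\|_2/(\sqrt m\|A\|_2))$; bounding $\|G\|_2$ and plugging in $\sigma = 52\|A\|_2 k(S) sm\sqrt n\, u$ from~\cref{eq:magnitudesigma} produces the factor $1 + 197\, k(S) sm\sqrt n\, u$ (the jump from $52$ to $197$ absorbing the high-probability bound on $\|G\|_2/\sqrt m$, roughly $\|G\|_2 \le \sqrt m + \sqrt n + t \lesssim 3.7\sqrt m$ with the stated failure probability). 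For the second term, $(\sigma/\sqrt m)\|G\|_2 \le \sigma \cdot (\|G\|_2/\sqrt m) \le 197\, \|A\|_2 k(S) sm\sqrt n\, u$ by the same tail bound on $\|G\|_2$. Collecting these and factoring out $\|A\|_2$ gives exactly the claimed inequality.

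The probability bookkeeping is the one genuinely delicate point: the final event is the intersection of (i) $\kappa_2(\tilde A) < 1/(3C_1 k(S))$, which holds with probability $\ge 1 - 2^{n-m}$ by \cref{cor:smoothedKappa} as used in Theorem~3.6, and (ii) a spectral-norm bound $\|G\|_2 \le C\sqrt m$ for the constant $C$ needed to turn $52$ into $197$, which I would control with a standard Gaussian operator-norm concentration inequality (e.g. $\mathbb P[\|G\|_2 > \sqrt m + \sqrt n + t] \le e^{-t^2/2}$) and tune $t$ so the failure probability of (ii) is at most $10^{-3}$. A union bound then yields the $1 - 2^{n-m} - 10^{-3}$ in the statement. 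The main obstacle, such as it is, is purely one of constant-chasing: verifying that with the chosen $\sigma$ and the chosen tail cutoff for $\|G\|_2$, the two occurrences of the constant $197$ are simultaneously valid and that $1 + 197\, k(S) sm\sqrt n\, u$ stays benign (close to $1$) under the assumption $smu < 1/67$ from~\cref{eq:assumptionsSmoothedThmYhat}; no new analytic idea is required beyond Theorems~3.5–3.6 and Gaussian norm concentration.
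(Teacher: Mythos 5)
Your proposal is correct and follows essentially the same route as the paper: its proof likewise combines \cref{thm:mainTheoremLSQR} applied to $\tilde{A}$ with \cref{thm:smoothedfiniteprec} and the Gaussian operator-norm tail bound $\mathbb{P}[\|G\|_2/\sqrt{m} > 1+\sqrt{n/m}+t/\sqrt{m}]\leq e^{-t^2/2}$ with $t=3.8$, yielding $\|G\|_2/\sqrt{m}\leq 3.78$ and hence the constant $52\times 3.78 < 197$. You actually supply more detail than the paper, whose proof is a two-sentence sketch omitting the decomposition $\Delta A = \Delta\tilde{A} + \sigma G/\sqrt{m}$, the substitution $\|\tilde{A}\|_2\leq\|A\|_2(1+3.78\sigma/\|A\|_2)$, and the union bound over the two random events that you spell out.
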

\begin{proof}
    We combine~\cref{thm:mainTheoremLSQR} and~\cref{thm:smoothedfiniteprec} to obtain the result. Additionally, we use the following classic result in the analysis of Gaussian matrices~\cite{davidson2001local}:
    \begin{equation*}
        \mathbb{P}\left(\|G\|_2/\sqrt{m} > 1 + \sqrt{\frac{n}{m}} + \frac{t}{\sqrt{m}}\right)\leq e^{-t^2/2},
    \end{equation*}
    applied with $t = 3.8$. This shows $\|\tilde{A}\|_2\leq\|A\|_2 + \sigma\|G\|_2/\sqrt{m}\leq \|A\|_2 + 3.78\sigma$ with probability at least $10^{-3}$, where $\sigma = 52\|A\|_2k(S)sm\sqrt{n}u$.
\end{proof}

\cref{thm:smoothedLSQR} allows us to conclude that under mild assumptions, smoothed sketch-and-apply Blendenpik results in a backward error of the same order as the backward error of unpreconditioned LSQR on $\hat{Y}z = b$. Under the conditions of the theorem, $\hat{Y}$ is indeed a well-conditioned matrix and LSQR is expected to obtain backward stable solutions. We removed the assumptions on $\kappa_2(A)$. 

\Cref{thm:smoothedLSQR} suggests that the perturbation needs to be of size $\mathcal{O}(sm\sqrt{n}u)$. This would be impermissible for many problems, especially if the dimensions are large.  We recommend a perturbation of magnitude approximately $10\|A\|_2u$. Although we lack the theoretical evidence for this choice, it works in our experiments (see~\cref{sec:numexpsmoothed}). 

Smoothing an LS problem should be done with much caution. There are many contexts where $\kappa_2(A) > u^{-1}$, yet perturbing the problem is unnecessary. For instance, if the ill-conditioning is caused by poor column scaling, sketch-and-apply will still compute accurate solutions without smoothing (see~\cref{sec:discussion}).
In this sense, $\kappa_2(A)$ is not an effective predictor of whether smoothing will be beneficial. We suggest one should smooth only if sketch-and-apply does not converge (see~\cref{alg:master}).

\section{Master sketch-and-apply algorithm and numerical experiments}\label{sec:masteralgorithmandnumexp}
We now present the implementation of the sketch-and-apply algorithms (see~\cref{alg:master}) in a similar fashion to the overall solver in Blendenpik~\cite{Avron2010a}. We perform the standard initial steps, i.e., drawing an embedding matrix $S$, computing the sketch $SA$, and computing the QR factorization $QR = SA$ using the Householder QR algorithm. Next, we compute the preconditioned matrix $Y = AR^{-1}$ explicitly with forward substitution and use (unpreconditioned) LSQR to solve $Yz = b$. We include sketch-and-solve initialization in this algorithm to speed up convergence. This step consists of computing $z_0 = Q^TSb$. If LSQR converges to the desired tolerance, we compute $x = R^{-1}z$ with back substitution.  In this case, we assume our algorithm computed a backward stable solution.

In the case where LSQR does not converge, we perturb/smooth $A$. We compute $\tilde{A} = A + \sigma G/\sqrt{m}$ for a Gaussian matrix $G$ with $\sigma = 10\|A\|_2u$ and perform the sketch-and-apply steps including a sketch-and-solve initial guess on $\tilde{A}x = b$. We recommend doing this because there are examples, although rare, where sketch-and-apply Blendenpik converges to an accurate solution only after smoothing (see~\cref{sec:numexpsmoothed}). The spectral norm of $A$ can be estimated, for instance, with the sketch $SA$ or using the power method.

\begin{algorithm}[h!]
	\caption{Master algorithm for sketch-and-apply to solve~\cref{eq:LeastSquaresProblem}. Here, HHQR refers to the Householder QR algorithm.} \label{alg:master}
	\begin{algorithmic}[1] 
	\STATE Draw a random sketching matrix $S\in\mathbb{R}^{s\times m}$, where $m\gg s > n$ 
	\STATE Compute $B=SA$ and $c = Sb$
	\STATE 
    Compute both $Q$ and $R$ of the QR factorization of $B$ using HHQR
        \STATE Compute $Y = AR^{-1}$ with forward substitution 
        \STATE Compute initial guess $z_0 = Q^Tc$
	
	\STATE Solve $Yz=b$ with LSQR and no preconditioner and initial guess $z_0$
	    \IF{LSQR converges to the desired tolerance}
                 \STATE Compute $x = R^{-1}z$ with back substitution 
	    \ELSE
                \STATE Draw a random standard Gaussian matrix $G\in\mathbb{R}^{m\times n}$ with i.i.d.~entries
	        \STATE Compute $\tilde{A} = A + \sigma G/\sqrt{m}$ for $\sigma = 10\|A\|_2u$
	        \STATE Compute $B=S\tilde{A}$ 
        	\STATE Compute both $Q$ and $R$ of the QR factorization of $B$ using HHQR
        	\STATE Compute $Y = \tilde{A}R^{-1}$ with forward substitution 
         \STATE Compute initial guess $z_0 = Q^Tc$
	        \STATE Solve $Yz=b$ with LSQR and no preconditioner and initial guess $z_0$
	        \STATE Compute $x = R^{-1}z$ with back substitution 
	    \ENDIF
	\end{algorithmic}
\end{algorithm}

It should be noted that without smoothing, LSQR iteration can converge to a solution with a sub-optimal residual (see~\cref{fig:smoothedfigure} (right)). However, it is infeasible to compute the backward error when $m$ is large as it requires computing the smallest singular value of an $m\times (m+n)$ matrix (although it is possible to reduce the cost~\cite{hallman2020estimating}). 
If the residual obtained from sketch-and-apply is less accurate than desired, one could estimate the condition number of $A$ via the condition number of $R$. If the condition number estimate of $R$ is of the same order of magnitude as $u^{-1}$, then the problem is severely ill-conditioned and one could try smoothing to potentially improve the accuracy of the computed solution. Smoothing can improve the convergence rate of LSQR, even if sketch-and-apply obtains a backward stable solution without smoothing (see~\cref{fig:smoothedfigure_Haar}). Nonetheless one should be careful with smoothing because examples exist where $\kappa_2(A)$ is large but smoothing remains unnecessary (see~\cref{fig:smoothedfigure_Haar}). Exactly when and when not to smooth remains unclear. 

\subsection{Numerical experiments with sketch-and-apply Blendenpik}\label{sec:numexpapply}
Now we examine the performance of sketch-and-precondition Blendenpik with sketch-and-solve initialization, sketch-and-apply Blendenpik with and without sketch-and-solve initialization, and the QR-based direct solver. The experiment considers ill-conditioned LS problems that are randomly generated (see~\cref{sec:instability} for details on how $A$ and $b$ are formed) and can be stored in local cache. The QR solver is implemented with the {\tt qr} and backslash {\tt $\backslash$} (with $R$) commands in MATLAB. The execution times are shown in~\cref{fig:small_time_exp}. 

We find that on this scale, sketch-and-apply Blendenpik, especially with SAS, is competitive relative to QR in terms of computational time. A sketch-and-solve initialization improves the computing time as fewer iterations are needed. Sketch-and-precondition with SAS is the computationally most efficient, and will in almost all cases converge to a solution with optimal residual and backward error. For most practical applications, this should remain the preferred option.
\begin{figure}[h!]
    \centering
    \includegraphics[width = \linewidth]{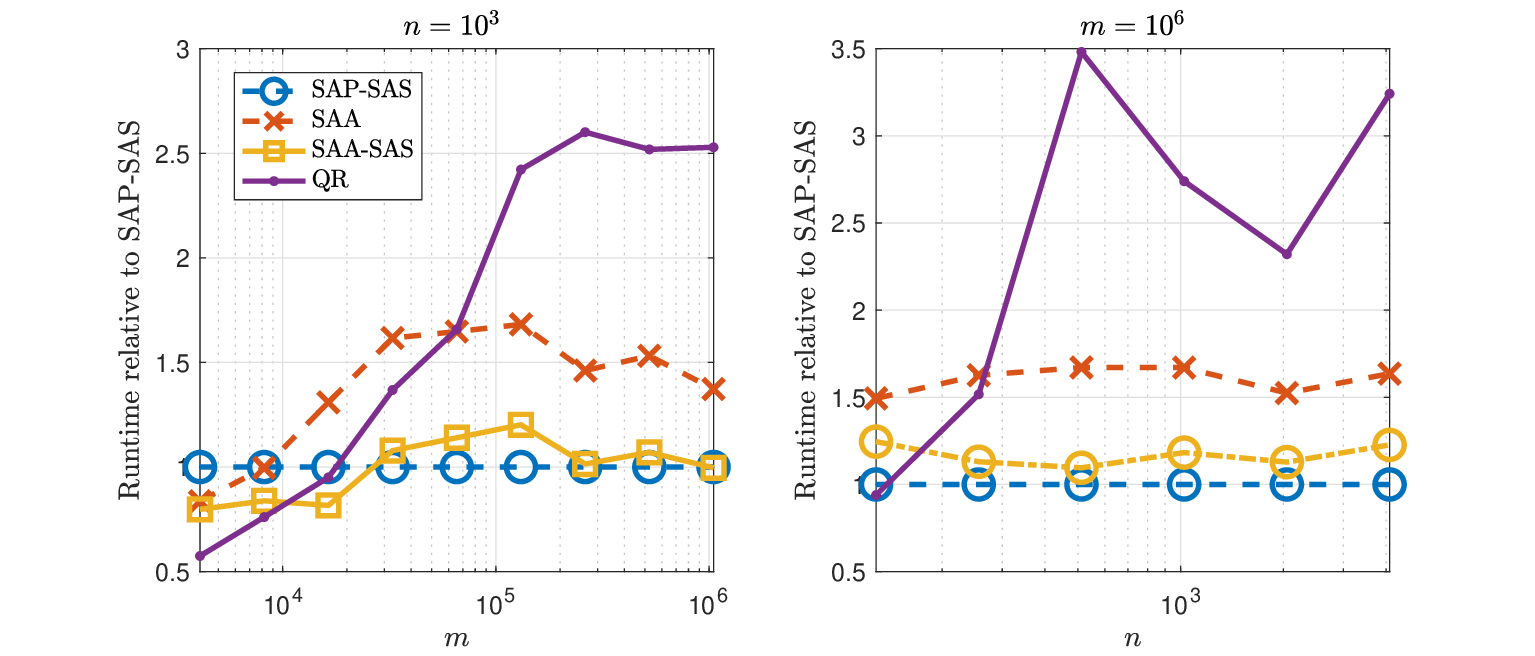}
    \caption{The relative execution time to solve large LS problems using sketch-and-precondition Blendenpik with sketch-and-solve initialization (SAS), sketch-and-apply Blendenpik, sketch-and-apply Blendenpik with SAS, and Householder QR. For each problem $\kappa(A) = 10^{10}$ and the noise level $\|e\|_2 = 10^{-10}$. The tolerance is set to $10^{-12}$ and the maximum number of iterations to 100 (this is never reached). Left: Timings for $n = 4000$ and $2^{12}\leq m \leq 2^{20}$. Right: Timings for $m = 10^6$ and $2^7\leq n\leq 2^{12}$. }
    \label{fig:small_time_exp}
\end{figure}

We furthermore note that sketch-and-apply (and sketch-and-precondition) algorithms could outperform the direct QR method by a larger factor if it is expensive to communicate with the matrix $A$. This context occurs, for example, when $A$ is too large to be stored in local cache and is instead stored on disk. Provided we obtain a good sketch of $A$, the number of iterations in LSQR before convergence will be modest, requiring limited streaming. The QR method, however, requires $n$ views for an $m\times n$ matrix.

\subsection{Numerical experiments with smoothed sketch-and-apply Blendenpik}\label{sec:numexpsmoothed}
Sketch-and-precondition Blendenpik with initialization, sketch-and-apply Blendenpik, its smoothed version, and Householder QR can compute accurate solutions to LS problems, even when $\kappa_2(A) > 1/u$ (see~\cref{fig:smoothedfigure_Haar}, the problems are generated as explained in~\cref{sec:instability}). Therefore, smoothing is only sometimes required for extremely ill-conditioned least squares problems. However, even when smoothing is unnecessary, there is some potential benefit to smoothing because the perturbed LS problem can lead to rapid LSQR convergence. Of course, there is a trade-off here as one converges rapidly to an accurate solution of the perturbed problem, and the computed solution may be less accurate for the original LS problem of interest. 

\begin{figure}[h!]
\centering
\begin{subfigure}{.5\textwidth}
  \centering
\includegraphics[width=\linewidth]{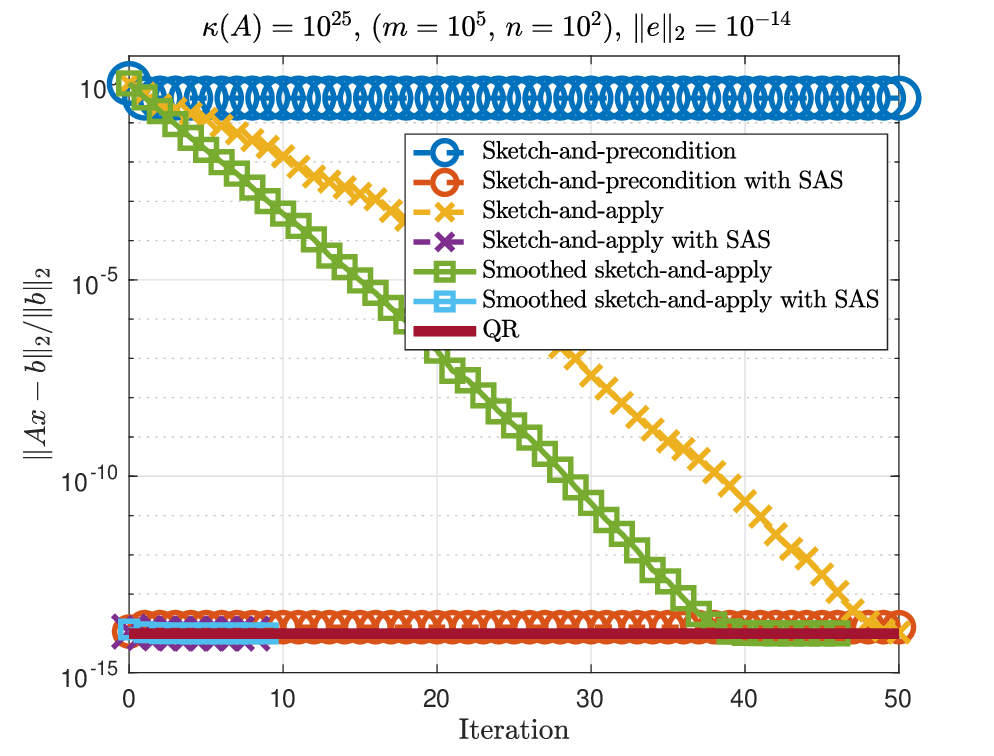}
\end{subfigure}%
\begin{subfigure}{.5\textwidth}
  \centering
  \includegraphics[width= \linewidth]{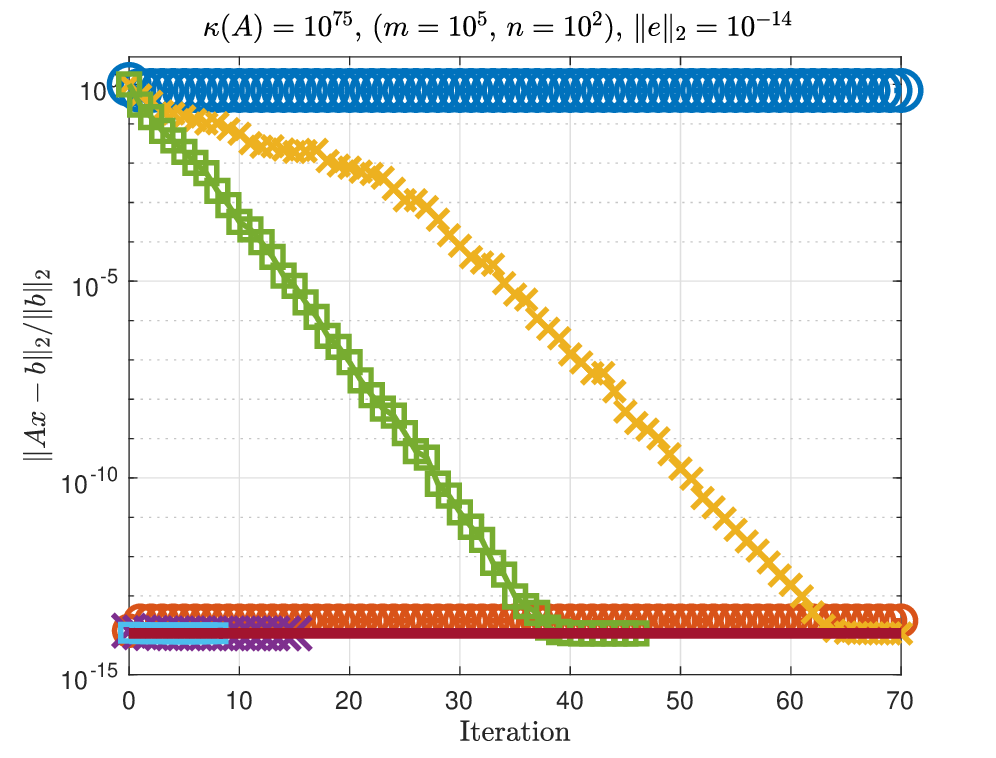}
\end{subfigure}
\caption{Smoothing is not always needed for LS problems with $\kappa_2(A)>1/u$. These matrices are formed by drawing singular vectors from the Haar distribution and letting the singular values decay exponentially from 1 to $1/\kappa_2(A)$. For comparison, we show the residual error computed by Householder QR. These problems are so ill-conditioned that the backward error is not an informative measure, i.e., most computed solutions (e.g., with very few LSQR iterations) give a backward error $\mathcal{O}(u)$.}
\label{fig:smoothedfigure_Haar}
\end{figure}

However, there are also examples for which smoothing provides a more accurate solution to the original problem of $\min_x\|Ax-b\|$, not $\min_x\|\tilde{A}x-b\|$ (see~\cref{fig:smoothedfigure}). For example, we take an LS problem involving a $1000\times 100$ Kahan matrix with $\theta = 1.1$ from the MATLAB \texttt{gallery} collection and another one involving a column-scaled $1000\times 10$ Vandermonde matrix involving equally spaced points between $-1$ and $1$. The Kahan and Vandermonde matrices are designed so that their condition numbers are $>1/u$. 
Without smoothing, even the QR-based algorithm fails to compute an accurate solution along with all the other methods. This is because the problem is so ill-conditioned that even a backward stable solution can behave wildly. However, after smoothing, sketch-and-apply Blendenpik computes an accurate solution to the original LS problems. It should be noted that one could also smooth before solving the problems with QR to obtain more accurate solutions.

\begin{figure}[h!]
\centering
\begin{subfigure}{.5\textwidth}
  \centering
\includegraphics[width=\linewidth]{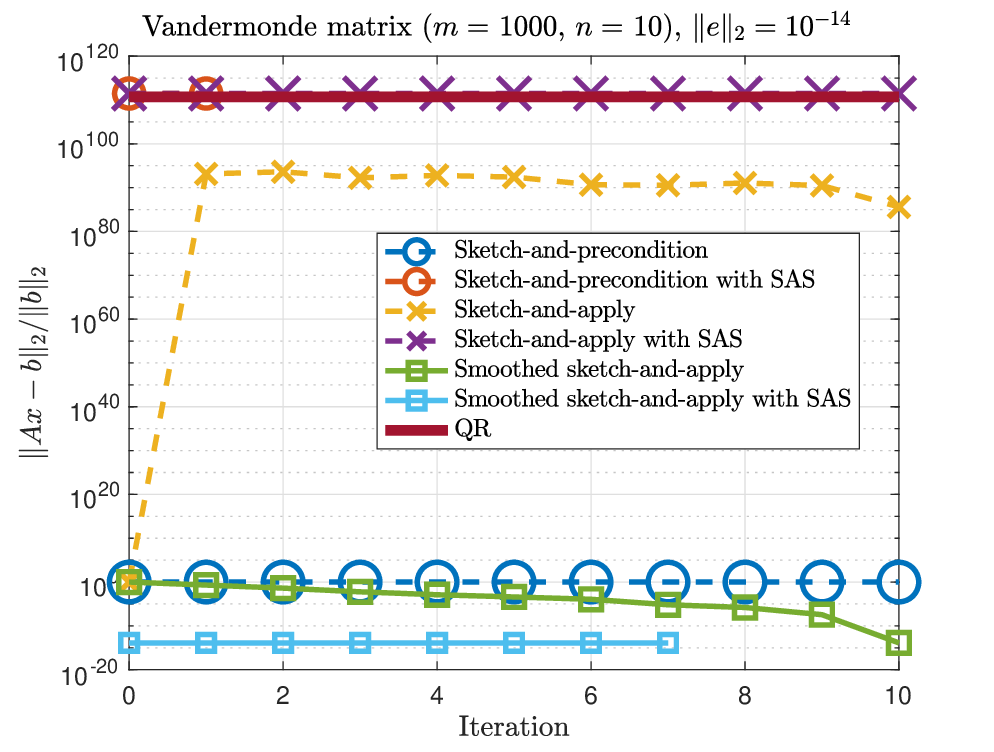}
\end{subfigure}%
\begin{subfigure}{.5\textwidth}
  \centering
  \includegraphics[width= \linewidth]{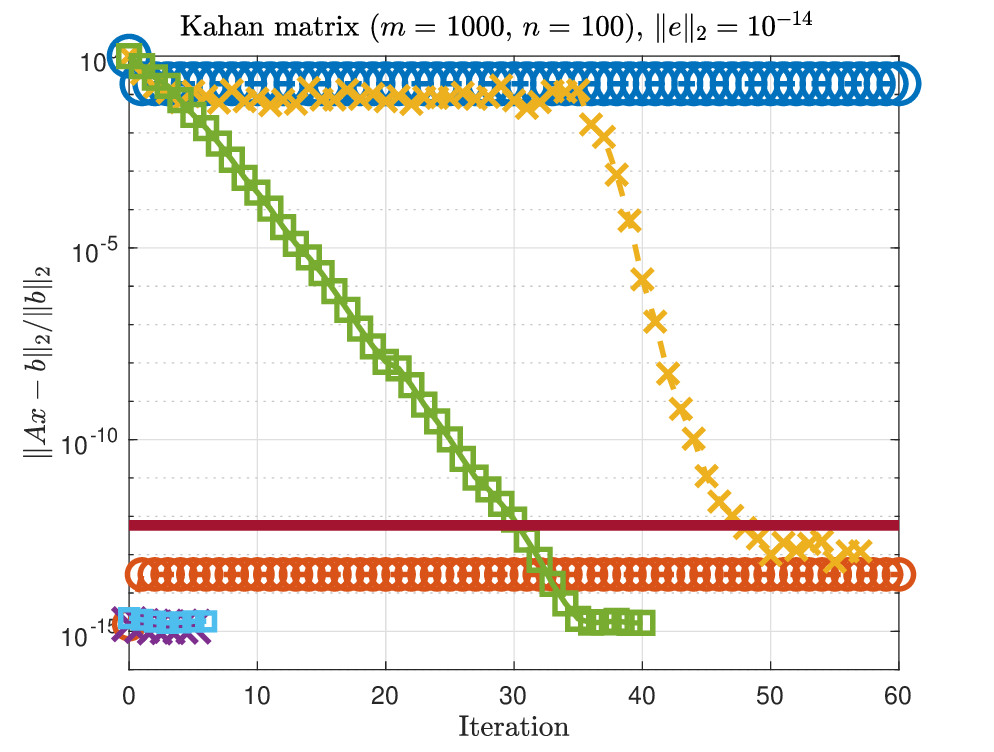}
\end{subfigure}
\caption{Smoothing can solve extremely ill-conditioned LS problems, even when sketch-and-apply Blendenpik and Householder QR cannot. For the Vandermonde matrix, we can only obtain accurate solutions using a smoothed algorithm (note the y-axis). The problem with the Kahan matrix can only be solved optimally with certain sketch-and-apply variations; the initialized sketch-and-precondition algorithm and standard sketch-and-apply attain sub-optimal residuals.
Again, these problems are too ill-conditioned for the backward error to be informative.
}
\label{fig:smoothedfigure}
\end{figure}
\subsection{Sparse matrices}
We briefly discuss the performance of the various algorithms in question on sparse matrices. Firstly, it should be noted that any sketch-and-apply variant will not respect the sparsity: the matrix $Y = AR^{-1}$ is generally dense. As a result, convergence will be slower and computational cost will be unnecessarily high. Remarkably,~\cref{fig:sparsefigures} shows that standard sketch-and-precondition can be numerically stable for sparse matrices with few non-zero entries. It appears that if $\text{nnz}(A)$ is sufficiently small, the rounding errors compound less and the algorithm finds accurate solutions. A precise explanation is left for future work. Unsurprisingly, all algorithms with sketch-and-solve initialization converge rapidly to accurate solutions.
\begin{figure}[h!]
\centering
\begin{subfigure}{.5\textwidth}
  \centering
\includegraphics[width=\linewidth]{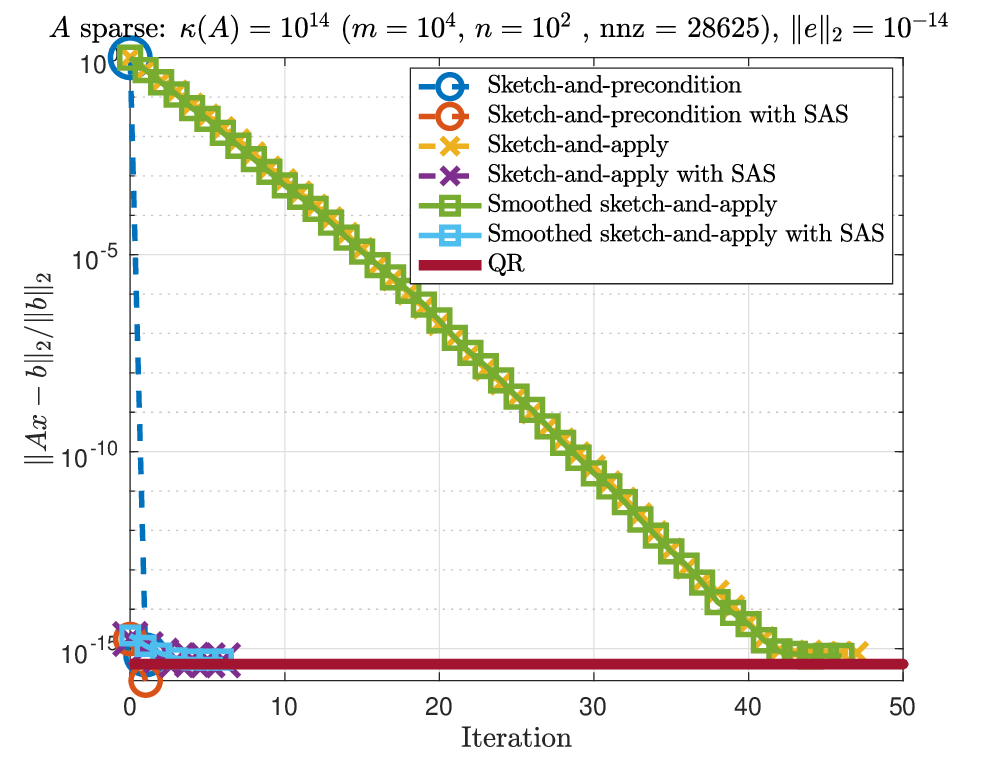}
\end{subfigure}%
\begin{subfigure}{.5\textwidth}
  \centering
  \includegraphics[width= \linewidth]{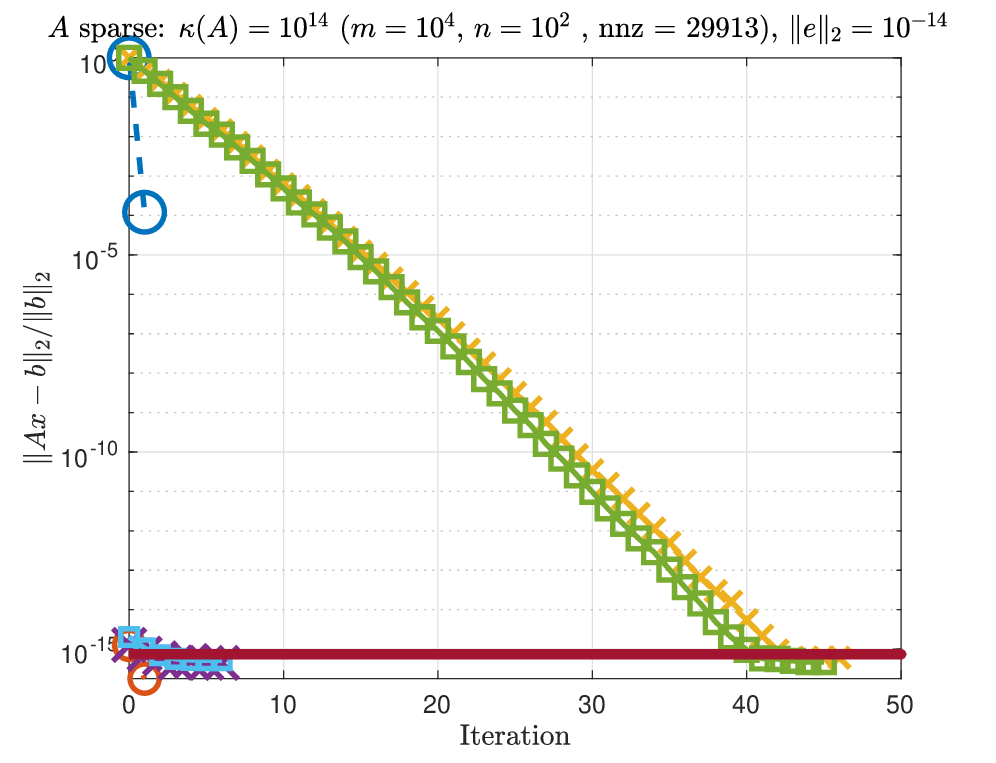}
\end{subfigure}
\caption{Sketch-and-apply algorithms cannot take advantage of sparsity of $A$. Note that sketch-and-precondition without sketch-and-solve initialization can lead to accurate solutions for sparse matrices with few non-zero entries (left figure). 
This behavior depends nontrivially on the sparsity etc; in the right figure, we observe the numerical instabilities we have seen throughout this work.}
\label{fig:sparsefigures}
\end{figure}

\section{Discussion}\label{sec:discussion}
We have shown that sketch-and-precondition algorithms, such as Blendenpik~\cite{Avron2010a}, are numerically unstable in their standard form for solving LS problems. We have stabilized the algorithm by explicitly computing the preconditioned matrix $AP$ and using an unpreconditioned iterative solver on $AP$. We coined this modification sketch-and-apply. We furthermore displayed that using sketch-and-solve initialization greatly improves convergence properties as well as the maximal attainable accuracy of sketch-and-precondition.

\subsection{The effectiveness of sketch-and-solve initialization}
Although a large part of this work was dedicated to investigating the (provable) numerical stability of sketch-and-apply and its smoothed version, one of the main messages---especially for practitioners---should be the remarkable effectiveness of the sketch-and-solve initial guess. Apart from extremely ill-conditioned cases (see~\cref{fig:smoothedfigure}), sketch-and-precondition with a sketch-and-solve initial guess attains accurate solutions in terms of residual, albeit not always backward stable. We urge practitioners to always choose~\cref{alg:blendenpik_initialguess} over~\cref{alg:blendenpik}, as the additional cost is minimal but it results in a better rate of convergence and better maximal attainable accuracy.
\subsection{The numerical stability of data-driven preconditioners}
The numerical instabilities observed in sketch-and-precondition Blendenpik raise larger questions on the stability of iterative methods with data-driven preconditioners for LS problems. Here, we refer to a data-driven preconditioner as a preconditioner constructed directly from $A$, without knowing where $A$ came from (such as the discretization of a continuous problem).~\cref{fig:LSQRcompintro} shows that even using $R_A$ (where $A = Q_AR_A$) as a preconditioner---the perfect data-driven preconditioner---does not lead to a backward stable solution. We suspect this is due to the numerical errors incurred each time the ill-conditioned preconditioner is applied in an iterative solver. Is it possible for a data-driven preconditioner to avoid compounding these rounding errors?

\subsection{The numerical stability of iterative least squares solvers} 
As to the numerical stability of sketch-and-apply, we were not able to state that the backward error $\|\Delta A\|_2$ is $\mathcal{O}(u\|A\|_2)$ (see~\cref{thm:mainTheoremLSQR,thm:smoothedLSQR}). Instead, we have shown that $\|\Delta A\|_2/\|A\|_2$ is of the same order as the backward error $\|\Delta \hat{Y}\|$ incurred when $\hat{Y}z = b$ is solved with unpreconditioned LSQR, where $\hat{Y}$ is well-conditioned. It has proven challenging to understand the literature on the numerical stability of CG-like iterative solvers such as LSQR. Various works seem to strongly hint at backward stability under assumptions on the condition number, but use computational results to complement the claim, see~\cite{bjorck1998stability,greenbaum1997estimating}. 
The numerical stability depends strongly on the specific implementation used in a way that we are yet to understand fully. We note that a recent result by Musco, Musco and Sidford~\cite[Thm.~2.1]{musco2018stability}, when specialized to well-conditioned positive definite linear systems, implies that $\epsilon$ forward error is achieved by using $\mathcal{O}(\log\frac{1}{\epsilon})$ bits, with Lanczos with modified Gram-Schmidt orthogonalization. For well-conditioned linear systems, taking $\epsilon=u$ this would imply the solution has an $\mathcal{O}(u)$ backward error, hence backward stable.

\subsection{Variants of sketch-and-precondition}
We have assumed specific choices for how the randomized preconditioner is constructed. In LSRN~\cite{Meng2014}, for instance, the preconditioner is chosen to be $P=V\Sigma^{-1}$, where $SA = U\Sigma V^T$ is the SVD of the sketch. Numerical experiments show us that SVD-based sketch-and-precondition methods incur similar numerical instabilities as the QR-based variant. The sketch-and-apply technique can also be used with LSRN ideas where the preconditioned matrix is computed as $Y = AV\Sigma^{-1}$. We suspect one can also prove similar results to our sketch-and-apply Blendenpik analysis for sketch-and-apply LSRN. Versions of \cref{lem:condsRandARinv,lem:condYhatinitial} hold almost identically for LSRN, with $R$ replaced by $\Sigma V$. The key difference is the numerical errors $\varepsilon_1$ and $\varepsilon_2$, which we have not investigated. Since computing the SVD is typically more expensive than a QR factorization, we recommend sketch-and-apply Blendenpik. One notable exception is when solving a sequence of Tikhonov regularized LS problems for various regularization parameters~\cite{meier2022randomized, Meng2014}.

\subsection{Ill-conditioning caused by poor column scaling}
If the ill-conditioning in an LS problem is caused by poor column scaling in $A$, we strongly recommend using QR-based sketch-and-apply techniques. The reason is that computing $AR^{-1}$ is invariant to column scaling, while the SVD is not. In fact, the assumption that $\kappa_2(A) \ll u^{-1}$ in~\cref{sec:condition} can be replaced by
$$\min_D\{\kappa_2(AD)\,:\, D = \text{diag($ b^{k_i}$), $k_i\in\mathbb{Z}, \,i = 1,\dots, n$}\} \ll u^{-1},$$
where $b$ is the machine base (usually $b=2$). 
Of course, one can also pre-process an LS problem by scaling the columns of $A$ by powers of the machine base so that each column has a norm that is close to $1$. 

\section*{Acknowledgments}
We thank Erin Carson and Zden\v{e}k Strako\v{s} for their valuable comments regarding the numerical stability of LSQR. We thank Fran\c{c}oise Tisseur for her input on Blendenpik in finite precision arithmetic. We are indebted to Ilse Ipsen and Michael Mahoney for their presentation and subsequent discussions on LS problems, which occurred during the ``Complexity of Matrix Computations" seminar on 1st September 2021. We thank the referees and the editor for their valuable comments. We are especially indebted to the referee who suggested trying sketch-and-precondition with the sketch-and-solve initial guess.

\bibliographystyle{plain}
\bibliography{references.bib}
\end{document}